\documentclass[12pt]{amsart}
\usepackage{amsmath,latexsym,amscd,amsbsy,amssymb,amsfonts,amsthm,fleqn,leqno,
euscript, graphicx, texdraw, pb-diagram}
\usepackage[all]{xy}
\usepackage{color}
\usepackage{tikz}
\numberwithin{equation}{section}



\newtheorem{thm}{Theorem}[section]
\newtheorem{pro}[thm]{Proposition}
\newtheorem{lem}[thm]{Lemma}
\newtheorem{cor}[thm]{Corollary}

\newtheorem{qu}[thm]{Question}

\theoremstyle{definition}

\theoremstyle{remark}
\newtheorem{claim}[thm]{Claim}

\hyphenation{Ma-zur-kie-wicz}



\begin{document}

\title[Extending homeomorphisms on Cantor cubes ]
{Extending homeomorphisms on Cantor cubes }

\author{E. Shchepin}
\address{Steklov Mathematical Institute of Russian Academy of Sciences,
8 Gubkina St. Moscow, 119991, Russia}
\email{scepin@yandex.ru}

\author{V. Valov}
\address{Department of Computer Science and Mathematics,
Nipissing University, 100 College Drive, P.O. Box 5002, North Bay,
ON, P1B 8L7, Canada} \email{veskov@nipissingu.ca}

\thanks{The second author was partially supported by NSERC
Grant 261914-19.}

 \keywords{Candor discontinuum, 0-dimensional spaces, homeomorphisms, $\tau$-negligible sets}

\subjclass{Primary 54C20, 54F45; Secondary 54B10, 54D30}


\begin{abstract}
We discuss the question of extending homeomorphism between closed subsets of the Cantor discontinuum $D^\tau$. For every set $P\subset D^\tau$ let $\mathfrak{L}_P$ be the set of cardinality $\lambda$ such that the $\lambda$-interior of $P$ is not empty. It is established that any homeomorphism $f$ between two proper closed subsets $P$ and $K$ of $D^\tau$ can be extended to an autohomeomorphism of $D^\tau$ provided the sets 
$\mathfrak{L}_P$ and $\mathfrak{L}_K$ do not have so many points of discontinuity and 
$f$ preserves the $\lambda$-interiors of $P$ and $K$.
\end{abstract}

\maketitle
\markboth{}{Extending homeomorphisms}




\section{Introduction}

According to \cite{kr}, in 1951 Ryll-Nardzewski presented his solution of Knaster's problem on extension of homeomorphisms on closed subsets of the Cantor set $D^{\aleph_0}$. Ryll-Nardzewski's proof was based on Boolean algebras and it was not published.

Here is Ryll-Nardzewski's theorem, see \cite{kr}: \textit{Let $P, K$ be a proper closed subsets of the Cantor set $D^{\aleph_0}$ and $f$ be a homeomorphism between $P$ and $K$ such that $f(\rm{int}~P)=\rm{int}~K$. Then there exists an autohomeomorphism of $D^{\aleph_0}$ extending $f$.}
A topological proof of Ryll-Nardzewski's theorem was established by Knaster-Reichbach \cite{kr}. More precisely, Knaster-Reichbach reduced their proof to the case when $\rm{int}~P=\rm{int}~K=\varnothing$, see Theorem 2.6 below. The non-metrizable analogue of Knaster-Reichbach's theorem was proved in our paper
\cite[Theorem 1.2]{sv}.

The present article is devoted to the proof of a non-metrizable analogue of the Ryll-Nardzewski theorem. In contrast to the metrizable case, the non-metrizable version of Ryll-Nardzewski's theorem cannot be directly obtained from \cite[Theorem 1.2]{sv}. The key role in our proof plays the concept of $\lambda$-interior.

For a space $X$, a subset $P\subset X$ and an infinite cardinal $\lambda$ we denote by $P^{(\lambda)}$ the {\em $\lambda$-interior} of $P$ in $X$, i.e. the set all $x\in P$ such that there exists a $G_\lambda$-subset $K$ of $X$ with $x\in K\subset P$. If $\lambda$ is finite, then $P^{(\lambda)}$ is the ordinary interior of $P$ and it is denoted by $P^{(0)}$. If there exists $\tau\geq\aleph_0$ such that
$P^{(\lambda)}$ is empty for all $\lambda<\tau$, we say that $P$ is {\em $\tau$-negligible in $X$}. The sets of all cardinals $\lambda$ with $P^{(\lambda)}\neq\varnothing$ is denoted by $\mathfrak{L}_P$.

Let $\lambda\in\mathfrak{L}_P$ be a limit cardinal. We say that $\mathfrak{L}_P$ is {\em discontinuous at $\lambda$} if 
$\overline{\bigcup_{\gamma<\lambda}P^{(\gamma)}}\subsetneqq P^{(\lambda)}$, otherwise $\mathfrak{L}_P$ is said to be {\em continuous at $\lambda$}. Denote by $\mathfrak{L}_P^d$ the cardinals of discontinuity  for $\mathfrak{L}_P$. The set $\mathfrak{L}_P^d$ is said to be discrete in $\mathfrak{L}_P$ if for every $\lambda\in \mathfrak{L}_P^d$ there is $\gamma<\lambda$ such that the interval $(\gamma,\lambda)$ is disjoint from $\mathfrak{L}_P^d$. Obviously, $\mathfrak{L}_P^d$ is discrete in $\mathfrak{L}_P$ if $\mathfrak{L}_P^d$ is finite, in particular that is true
provided $\mathfrak{L}_P$ is finite.
 
Now, we can formulate a non-metrizable analogue of Ryll-Nardzewski's theorem.
\begin{thm}
Let $f$ be a homeomorphism between two proper closed subsets $P$ and $K$  of $D^\tau$ such that $\mathfrak{L}_P^d$ is discrete in $\mathfrak{L}_P$ and $f(P^{(\lambda)})=K^{(\lambda)}$ for  every $\lambda\in\mathfrak{L}_P$. Then $f$ can be extended to a homeomorphism of $D^\tau$.
\end{thm}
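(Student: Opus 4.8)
The plan is to organize the extension around the filtration of $P$ by its $\lambda$-interiors. Since a set fixing $\le\lambda$ coordinates is a $G_\lambda$-set and, conversely, every $G_\lambda$-set containing a point $x$ contains such a face through $x$, one has $x\in P^{(\lambda)}$ if and only if $P$ contains a face $\{y:y_i=x_i \text{ for } i\in A\}$ with $|A|\le\lambda$; thus the increasing family $\{P^{(\lambda)}\}_{\lambda\le\tau}$ records the codimension of the faces sitting inside $P$, with $P^{(\tau)}=P$ because closed subsets of $D^\tau$ are $G_\tau$-sets. The residual set $P\setminus\bigcup_{\lambda<\tau}P^{(\lambda)}$ consists of the points through which $P$ contains no face of codimension $<\tau$, i.e. the genuinely $\tau$-negligible part, and on this part the non-metrizable Knaster--Reichbach theorem of \cite{sv} is available. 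The hypothesis $f(P^{(\lambda)})=K^{(\lambda)}$ for every $\lambda$ says exactly that $f$ carries each stratum of $P$ onto the corresponding stratum of $K$, hence preserves the minimal codimension of a face through each point. I would use this to run a transfinite induction on the cardinals $\lambda\le\tau$, extending $f$ over a neighborhood of $\overline{P^{(\lambda)}}$ at stage $\lambda$ and keeping earlier stages fixed.

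Concretely, I would build the extension as a limit of partial homeomorphisms between clopen subsets of $D^\tau\setminus P$ and $D^\tau\setminus K$ defined over a transfinite system of clopen covers $\U$, refined so as to be adapted to the stratification. On a thick stratum --- where $P^{(\lambda)}$ contains a face of codimension $\lambda<\tau$, a copy of $D^\tau$ cut out by fixing the coordinates in some $A$ --- I would use the product splitting $D^\tau\cong D^A\times D^{\tau\setminus A}$: a homeomorphism between two faces of equal codimension extends to an autohomeomorphism of $D^\tau$, and this is the mechanism by which $f$ is carried across the faces contained in $P$. On the $\tau$-negligible residual part I would invoke the Knaster--Reichbach extension of \cite{sv}, which supplies a homeomorphism of the complementary regions converging to $f$ along the negligible set. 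The relation $\simU$ and the stars $\St(\cdot)$ of the covers would be used to record, at each scale $\lambda$, which pieces on the $P$-side are matched with which pieces on the $K$-side, so that the partial homeomorphisms cohere and their limit is a genuine autohomeomorphism rather than merely a continuous extension.

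The hard part will be coherence, on two fronts. First, the image $f(F)$ of a face $F\subset P$ need not itself be a coordinate face of $D^\tau$, only a homeomorph of $D^\tau$ lying inside the matching stratum $K^{(\lambda)}$; so the naive ``extend the face homeomorphism by a product'' must be replaced by a matching of clopen neighborhoods that respects $f$ only up to the current cover, and one must show these approximate matchings converge correctly. Second, faces of every codimension below $\tau$ may accumulate onto the negligible part and onto one another, so the partial homeomorphisms built at successive cardinals need not stabilize along a single cofinal chain; ensuring that the matchings chosen at stage $\lambda$ remain compatible at all later stages, and at limit cardinals, is precisely where the full strength of the hypothesis $f(P^{(\lambda)})=K^{(\lambda)}$ is needed --- it forces the accumulation pattern of faces to be preserved by $f$. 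Making this quantitative (choosing covers fine enough that the star of every piece meeting $\overline{P^{(\lambda)}}$ is matched correctly, and verifying that the resulting transfinite system has a well-defined limit that is bijective and continuous both ways on all of $D^\tau$) is the crux, and it is here that the Knaster--Reichbach machinery of \cite{sv} has to be interwoven with the face-extension argument rather than simply applied as a black box.
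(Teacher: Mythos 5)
Your reading of the stratification is correct --- the face characterization of $P^{(\lambda)}$, the identity $P^{(\tau)}=P$, and the observation that the hypothesis says exactly that $f$ preserves the strata --- but the mechanism you propose for the extension is the one the paper explicitly warns against: the introduction notes that, in contrast to the metrizable case, the theorem ``cannot be easily obtained from the analogue of the Knaster-Reichbach theorem.'' The obstruction is structural. When $\tau$ is uncountable there is no transfinite system of clopen covers of $D^\tau$ whose meshes shrink to points, so a chain of partial homeomorphisms indexed by an uncountable ordinal, matched only ``up to the current cover,'' has no reason to converge to anything, let alone to a continuous bijection; the metric convergence that closes the Knaster--Reichbach argument for $D^{\aleph_0}$ has no substitute on the space side. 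You correctly identify this coherence problem as the crux, but the remedy you offer (``choosing covers fine enough that the star of every piece meeting $\overline{P^{(\lambda)}}$ is matched correctly'') is circular: fineness of covers is precisely the resource that is unavailable. A second, independent problem is the plan to extend over a neighborhood of $\overline{P^{(\lambda)}}$ at stage $\lambda$ ``keeping earlier stages fixed'': the strata are not separated from one another --- $P^{(\lambda)}$ accumulates on the residual part and the faces accumulate onto each other, as you note --- so a neighborhood-supported extension made at stage $\lambda$ cannot in general be frozen at later stages.

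What is missing is the paper's actual coherence mechanism, which works on the coordinate side rather than the space side. One first reduces to a single closed set and an autohomeomorphism via the disjoint-union trick ($Q=P\biguplus K\subset X\biguplus Y$, $g=f\biguplus f^{-1}$), then filters the index set $A$ by $f$-admissible subsets $A(\alpha_\mu)$, indexed by the cardinals $\mu\in\mathfrak{L}=\{\mu: P^{(\mu)}\neq\varnothing\}$, chosen (Propositions 2.1 and 2.3, Lemma 2.5) so that $\pi_{A(\alpha_\mu)}^{-1}\bigl(P_{A(\alpha_\mu)}^{(\mu)}\bigr)=P^{(\mu)}$ and so that over points outside the $\lambda$-interior the fiber traces of $P$ are negligible, hence nowhere dense in the relevant subproducts. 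The transfinite induction then runs over $\mathfrak{L}$ along this coordinate filtration: at a successor cardinal the extension over $X_{A(\alpha_\mu)}$ is built as a \emph{fiberwise} homeomorphism with respect to the already-constructed extension over $X_{A(\alpha_\lambda)}$, where on each fiber the metrizable Knaster--Reichbach theorem (Theorem 2.6) supplies an extension because the fiber traces are nowhere dense, and --- this is the key point --- these pointwise choices are assembled into a continuous family not by approximation but by Michael's zero-dimensional selection theorem applied to a lower semicontinuous map $\Phi:X\rightsquigarrow\mathcal H(\mathfrak{C})$ (Lemma 4.1), combined with the fact that $\mathcal H$ of a product of metric compacta is an absolute extensor for $0$-dimensional compacta (Theorem 3.4, Corollary 3.5). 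At limit cardinals coherence costs nothing, because all extensions commute with the projections and therefore assemble as an inverse limit. This ``fiberwise extension $+$ selection theorem $+$ AE(0) for homeomorphism groups'' package is exactly what replaces the unavailable convergence argument; without it, or an equivalent device, the limit your plan calls for does not exist.
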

Observe that the condition $f(P^{(\lambda)})=K^{(\lambda)}$ for any cardinal number $\lambda\geq 0$ is necessary for the
existence of a homeomorphic extension of $f$. Let's also note that the case $P^{(\lambda)}=\varnothing$ for all $\lambda<\tau$ was settled in \cite{sv}.
\begin{qu}
Let $f$ be a homeomorphism between two proper closed subsets $P$ and $K$  of $D^\tau$. Is it true that $f$ can be extended to a homeomorphism of $D^\tau$ provided $f(P^{(\lambda)})=K^{(\lambda)}$ for  every $\lambda\in\mathfrak{L}_P$? 
\end{qu}

\section{Some preliminary results}
Everywhere below, if not stated otherwise,
$X=\prod_{\alpha\in A}X_\alpha$ is an uncountable product of metric compacta, $P\subset X$ is a closed proper subset and $f:P\to P$ is a homeomorphism such that $f(P^{(\mu)})=P^{(\mu)}$ for  every cardinal $\mu$. If $B\subset A$ we denote by $\pi_{B}:X\to\prod_{\alpha\in B}X_\alpha$ the projection. In case $B=\{\alpha\}$ we write $\pi_\alpha$ instead of $\pi_{\{\alpha\}}$. We also denote $\pi_B(P)$ by $P_B$ and $\pi_B(P^{(\lambda)})$ by $P_B^{(\lambda)}$. We have to distinguish the sets $P_B^{(\lambda)}$ and $(P_B)^{(\lambda)}$.
Since  $P^{(\mu)}$ is a union of $G_\mu$-subsets of $X$, according to \cite{ef} the closure of $P^{(\mu)}$ is also a $G_\mu$-set. Hence,
$P^{(\mu)}$ is closed in $X$ for every cardinal $\mu$ and there is a set $B\subset A$ of cardinality $\mu$ with $\pi_B^{-1}(P_B^{(\mu)})=P^{(\mu)}$.
A set $B\subset A$ is called {\em $f$-admissible} if there exists a homeomorphism $f_B:P_B\to P_B$ with
$\displaystyle\pi_{B}\circ f=f_B\circ\pi_{B}$. It is easily seen that an arbitrary union of $f$-admissible sets is also $f$-admissible.
According to \cite[Proposition 2.1]{sv} there is a family $\{B(\alpha):\alpha\in A\}$ of countable $f$-admissible sets such that $\alpha\in B(\alpha)$ for every $\alpha$. We say that a set $C\subset A$ is {\em saturated} if $C=\bigcup_{\alpha\in C}B(\alpha)$. Every saturated set is
$f$-admissible.

\begin{pro}
Every set $C\subset A$ is contained in a saturated set $B\subset A$ of the same cardinality as $C$.
\end{pro}

\begin{proof}
Since the family of $f$-admissible sets is closed under arbitrary unions, the set
$B=\bigcup_{\alpha\in C}B(\alpha)$ is saturated and has the same cardinality as $C$.
\end{proof}

\begin{lem}\cite[Lemma 2.2]{sv}
Let $K\subset X$ be a closed set.
Suppose $\tau >\aleph_0$ and $C\subset A$ is a set of cardinality $<\tau$ such that $(\{z\}\times X_{A\backslash C})\cap K$ is $\tau$-negligible in $\{z\}\times X_{A\backslash C}$ for every $z\in K_C$. Then $K_{A\backslash C}\neq X_{A\backslash C}$.
\end{lem}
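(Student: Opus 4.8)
The plan is to argue directly: writing $Y=X_{A\setminus C}$, $Z=X_C$, and $\pi_Y=\pi_{A\setminus C}$, $\pi_Z=\pi_C$ for brevity, I will assume for contradiction that $\pi_Y(P)=Y$ and produce a single fibre $P_{z^\ast}=\{y\in Y:(z^\ast,y)\in P\}$ containing a face that fixes too few coordinates to be consistent with $\tau$-negligibility. Since each $X_\alpha$ is a metric compactum, $Z$ has weight $\le\kappa:=\max(|C|,\aleph_0)$, and because $|C|<\tau$ and $\tau$ is uncountable (the nonmetrizable case; for $\tau=\aleph_0$ the statement is false, as the diagonal in $[0,1]\times[0,1]$ shows) we have $\kappa<\tau$. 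The first, purely combinatorial, observation is that in a product of metric compacta a nonempty $G_\lambda$-subset always contains a \emph{$\lambda$-face} $\pi_T^{-1}(w)$ with $|T|\le\lambda$: pick a point, shrink each of the $\le\lambda$ defining open sets to a basic neighbourhood, and collect the $\le\lambda$ coordinates involved; conversely every such face is a $G_\lambda$-set. Hence $\tau$-negligibility is equivalent to containing no nonempty face fixing fewer than $\tau$ coordinates, and it suffices to exhibit $z^\ast$ for which $P_{z^\ast}$ contains a nonempty $\kappa$-face.

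I would first isolate the metrizable core. \textbf{Base case:} if $Z$ is a \emph{metric} compactum and closed $P\subseteq Z\times Y$ satisfies $\pi_Y(P)=Y$, then some fibre $P_z$ contains a nonempty countable ($G_\delta$) face. I prove this by a shrinking construction of length $\omega$. Fix a countable base of $Z$ and build decreasing nonempty closed sets $R_n\subseteq Z$ with $\diam R_n\to0$, together with decreasing finite faces $\Psi_n\subseteq Y$ (starting from $R_0=Z$, $\Psi_0=Y$), keeping the invariant $\Psi_n\subseteq\pi_Y\bigl(P\cap(R_n\times Y)\bigr)$. At each step I cover the compact $R_n$ by finitely many base sets of small diameter, so the face $\Psi_n$ — itself a product of metric compacta, hence Baire — is covered by the finitely many closed sets $\pi_Y(P\cap(\overline V\times Y))$; one of them has nonempty interior, hence contains a finite face $\Psi_{n+1}$, and I let $R_{n+1}$ be the corresponding piece of $R_n$. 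In the limit $\bigcap_nR_n=\{z^\ast\}$ (nested nonempty compacta of vanishing diameter) while $\Psi^\ast=\bigcap_n\Psi_n$ is a countable face; the invariant survives (the sets $P^y\cap R_n$ are nested nonempty compacta), so $\Psi^\ast\subseteq P_{z^\ast}$.

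The general case is obtained by iterating the base case transfinitely along the coordinates of an embedding $Z\hookrightarrow[0,1]^\kappa$. By recursion on $i<\kappa$ I build decreasing nonempty closed sets $R_i\subseteq Z$ and decreasing faces $\Phi_i\subseteq Y$ (adjoining countably many coordinates at each step), with the same invariant $\Phi_i\subseteq\pi_Y(P\cap(R_i\times Y))$. At step $i$ I apply the base case to the image of $P\cap(R_{<i}\times\Phi_{<i})$ under $q_i\times\id$, where $q_i\colon Z\to[0,1]$ is the $i$-th coordinate and $\Phi_{<i}$ is again a product of metric compacta onto which this image projects; this yields a value $t_i$ and a countable face $\Phi_i\subseteq\Phi_{<i}$ inside the fibre over $t_i$, and I set $R_i=R_{<i}\cap q_i^{-1}(t_i)$. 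The decisive point is that every $z\in R_i$ has $q_i(z)=t_i$, so all points of $R^\ast=\bigcap_{i<\kappa}R_i$ agree on every coordinate of the embedding; thus $R^\ast$ is a \emph{single} point $z^\ast$. Since the total number of adjoined coordinates is $\le\kappa$, the limit face $\Phi^\ast=\bigcap_{i<\kappa}\Phi_i$ is a nonempty $\kappa$-face, and the invariant (nested nonempty compacta at limits, using compactness of $Z$ and of the $P^y$) gives $\Phi^\ast\subseteq P_{z^\ast}$, contradicting $\kappa<\tau$.

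I expect the main obstacle to be precisely the feature that defeats the naive approach: the index set $P_C=\pi_C(P)$ of fibres can have cardinality $2^{|C|}\ge\tau$, so one can neither pick one avoiding point per fibre nor apply a $\kappa$-Baire theorem to the cover $\bigcup_{z}P_z=Y$. The construction circumvents this by never enumerating fibres — it uses only the $\le\kappa$ coordinates of $Z$ to pin down $z^\ast$, and at each level replaces the uncountable union of fibres by a \emph{finite} one through compactness of the relevant piece of $Z$, so that ordinary Baire category on a single face suffices. The remaining care is bookkeeping at limit stages (keeping $R_{<i}$ nonempty and the invariant intact via compactness), and the standing use of $\tau>\aleph_0$, which is exactly what makes $\kappa=\max(|C|,\aleph_0)<\tau$.
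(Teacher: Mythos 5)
Your proof is correct, but there is nothing in this paper to compare it against: the paper does not prove the lemma at all, it imports it verbatim from \cite{sv}, so your argument has to stand on its own --- and it does. Both pillars are sound. First, the translation of negligibility into faces is valid for uncountable $\tau$: a nonempty $G_\lambda$-subset of a product of metric compacta contains a face $\pi_T^{-1}(w)$ with $|T|\le\lambda\cdot\aleph_0$ (shrink the defining open sets to basic boxes through one point), and conversely such a face is a closed $G_{|T|\cdot\aleph_0}$-set because points of metric factors are $G_\delta$. Second, the shrinking construction works: in the metrizable base case a finite closed cover of the current face must have a member with nonempty relative interior (a finite union of closed sets with empty interior has empty interior, so Baire is not even needed), which produces the next finite face and the next piece of $R_n$; the transfinite iteration along an embedding $X_C\hookrightarrow[0,1]^\kappa$ pins $R^\ast$ down to a single point while fixing only $\kappa=\max(|C|,\aleph_0)<\tau$ coordinates of $X_{A\setminus C}$, and the invariant survives limit stages by compactness of the fibres $P^y$. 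Your side remark that the statement is false for $\tau=\aleph_0$ is also correct and genuinely needed: the diagonal of $[0,1]^2$ satisfies the hypotheses but not the conclusion, so the lemma must be read with $\tau$ uncountable. This is harmless for the paper, which only ever invokes the lemma with the negligibility parameter equal to a cardinal $\mu>\lambda\ge\aleph_0$ (in Proposition 2.3 and Lemma 2.5), but it is a real restriction on the statement as printed, and your proof covers exactly the cases in which the lemma is true and used.

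One imprecision to repair in a final write-up: in the base case the covering closed sets must be $\pi_Y(P\cap((\overline{V}\cap R_n)\times Y))$ rather than $\pi_Y(P\cap(\overline{V}\times Y))$. With the unrestricted sets, a point $y\in\Psi_{n+1}$ only acquires a witness $z\in\overline{V}$, which may lie outside $R_n$, and then the invariant $\Psi_{n+1}\subseteq\pi_Y(P\cap(R_{n+1}\times Y))$ for $R_{n+1}=\overline{V}\cap R_n$ would not follow. Since you explicitly state and rely on that invariant (and call $R_{n+1}$ ``the corresponding piece of $R_n$''), this is a notational slip rather than a gap, and the fix is immediate.
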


If $\lambda$ is any cardinal number, then $\lambda^+$ denotes the successor of $\lambda$.

\begin{pro}\label{pro}
Let $C\subset A$ is a saturated set of cardinality $\lambda\geq\aleph_0$ such that $\pi_C^{-1}(P_C^{(\lambda)})=P^{(\lambda)}$.
If $P\neq P^{(\lambda)}$
then there is a saturated set $B\subset A$ of cardinality $\lambda^+$  such that
\begin{itemize}
\item[(i)] $C\subset B$ and $\pi_B^{-1}(P_B^{(\lambda^+)})=P^{(\lambda^+)}$;
\item[(ii)] Every compact set $F\subset P_B\backslash P_B^{(\lambda)}$ is  $\lambda^+$-negligible in $X_B$.
\end{itemize}
\end{pro}
\begin{proof}
Since $P_C^{(\lambda)}$ is a closed subset of $P_C$ and the weight of $X_C$ is $\lambda$, $P_C\backslash P_C^{(\lambda)}$ is the union of
$\lambda$ many closed sets $F_\xi$, $\xi<\omega(\lambda)$ (here $\omega(\lambda)$ is the first ordinal of cardinality $\lambda$). Let $P_\xi=P\cap\pi_C^{-1}(F_\xi)$. Because $P_\xi$ are disjoint from $P^{(\lambda)}$ and $P\neq P^{(\lambda)}$,
each $P_\xi$ is
a $\lambda^+$-negligible set in $X$. So are the sets $P_\xi(y)=P_\xi\cap(\{y\}\times X_{A\backslash C})$, $y\in F_\xi$. This observation implies that
each $\pi_{A\backslash C}(P_\xi(y))$ is $\lambda^+$-negligible in $X_{A\backslash C}$. Indeed, otherwise $\pi_{A\backslash C}(P_\xi(y))$ would contain a closed $G_\nu$-subset of $X_{A\backslash C}$ with $\nu\leq\lambda$, and using that $\{y\}\times X_{A\backslash C}$ is a $G_\lambda$-set in $X$, one can show that $P_\xi(y)$ also contains a closed $G_\lambda$-subset of $X$. This is a contradiction because
$P_\xi(y)\subset P\backslash P^{(\lambda)}$.

Therefore, according to Lemma 2.2,
$\pi_{A\backslash C}(P_\xi)\neq X_{A\backslash C}$ for every $\xi$. Hence, we can find a countable set $C_1(\xi)\subset A\backslash C$
such that $\pi_{C_1(\xi)}(P_\xi)\neq X_{C_1(\xi)}$. By Proposition 2.1, we can assume that each $C_1(\xi)$ is saturated.
 Then $C_1=\bigcup_\xi C_1(\xi)$ is a saturated set of cardinality $\lambda$ disjoint from $C$ with
$\pi_{C_1}(P_\xi)\neq X_{C_1}$ for all $\xi$. Moreover, $B_1=C\cup C_1$ is also a saturated set of cardinality $\lambda$, and let $F_\xi^1=(\pi^{B_1}_C)^{-1}(F_\xi)$. Obviously,
$P_\xi=P\cap\pi_{B_1}^{-1}(F_\xi^1)$,
$\pi_{B_1}^{-1}(P_{B_1}^{(\lambda)})=P^{(\lambda)}$ and $P_{B_1}\backslash P_{B_1}^{(\lambda)})=\bigcup_\xi F_\xi^1$. Since all sets $P_\xi(z)=P_\xi\cap(\{z\}\times X_{A\backslash B_1})$, $z\in F_\xi^1$, are $\lambda^+$-negligible in $X$, the arguments from the previous paragraph show that the sets
$\pi_{A\backslash B_1}(P_\xi(z))$ are $\lambda^+$-negligible in $X_{A\backslash B_1}$.
So, by Lemma 2.2  $\pi_{A\backslash B_1}(P_\xi)\neq X_{A\backslash B_1}$ for every $\xi$. As above, we can find a family  of countable saturated sets  $C_2(\xi)\subset A\backslash B_1$ with $\pi_{C_2(\xi)}(P_\xi)\neq X_{C_2(\xi)}$.
Let $C_2=\bigcup_\xi C_2(\xi)$. In this way, by transfinite induction we construct for every $\gamma<\omega(\lambda^+)$ a family
$\phi_\gamma=\{C_\gamma(\xi):\xi<\omega(\lambda)\}$ of countable saturated sets such that the sets
$C_\gamma=\bigcup_{\xi}C_\gamma(\xi)$ satisfy the following conditions :
\begin{itemize}
\item[(1)] $\{C_\gamma:\gamma<\omega(\lambda^+)\}$ is a disjoint family;
\item[(2)] $C_\gamma\subset A\backslash\bigcup_{\beta<\gamma}(C\cup C_\beta)$ if $\gamma$ is a limit ordinal;
\item[(3)] $C_{\gamma+1}\subset A\backslash\bigcup_{\beta\leq\gamma}(C\cup C_\beta)$;
\item[(4)] $\pi_{C_\gamma}(P_\xi)\neq X_{C_\gamma}$ for all $\xi$.
\end{itemize}
Let $B'=\bigcup_\gamma B_\gamma$, where $B_\gamma=\bigcup_{\beta\leq\gamma}C\cup C_\beta$. Clearly, $B'$ is a saturated set of cardinality  $\lambda^+$ and $C\subset B'$.
Since $P^{(\lambda^+)}$ is a closed $G_{\lambda^+}$-subset of $X$, there is a saturated set $B\subset A$ of cardinality $\lambda^+$ containing $B'$ such that
$\pi_B^{-1}(P_B^{(\lambda^+)})=P^{(\lambda^+)}$.

For every $\xi<\omega(\lambda)$ let $H_\xi=(\pi_C^B)^{-1}(F_\xi)\cap P_B$. We have $\pi_B^{-1}(P_B^{(\lambda)})=P^{(\lambda)}$,
$P_B\backslash P_B^{(\lambda)}=\bigcup_\xi H_\xi$ and $\pi^B_{C_\gamma}(H_\xi)=\pi_{C_\gamma}(P_\xi)$ for all $\gamma$.
\begin{claim}
Every compact set $F\subset P_B\backslash P_B^{(\lambda)}$ is $\lambda^+$-negligible in $X_B$.
\end{claim}
Fix a point $y_C^*\in X_C\backslash P_C^{(\lambda)}$ and for every $\gamma<\omega(\lambda^+)$ choose a point $y_\gamma^*\in X_{C_\gamma}\backslash\pi_{C_\gamma}(P_\xi)$.
Let $y^*\in X_B$ be a point with $\pi_C^B(y^*)=y_C^*$ and $\pi_{C_\gamma}^B(y^*)=y_{\gamma}^*$ for all $\gamma<\omega(\lambda^+)$. Consider the set $\sum(y^*)$ of all
$y\in X_B$ such that the cardinality of the set $\{\gamma<\omega(\lambda^+):\pi^B_{C_\gamma}(y)\neq y^*_\gamma)\}$ is $<\lambda^+$.
Item $(4)$ implies $H_\xi\subset X_B\backslash\sum(y^*)$ for all $\xi$, so $\bigcup_{\xi}H_\xi\subset X_B\backslash\sum(y^*)$.
Since $P_B\backslash P_B^{(\lambda)}=\bigcup_\xi H_\xi$, $F\subset X_B\backslash\sum(y^*)$.
Suppose $F$ contains a closed $G_\beta$-subset of $X_B$ for some $\beta<\lambda^+$. Then there exist a set $\Gamma\subset B$ of cardinality $\beta$ and a point $z\in X_\Gamma$  with
$(\pi^B_\Gamma)^{-1}(z)=\{z\}\times X_{B\backslash\Gamma}\subset F$. Since $\Gamma$ can contains at most $\beta$ many sets $C_\gamma$,
$\{z\}\times X_{B\backslash\Gamma}$ contains points from
$\sum(y^*)$, which contradicts $F\subset X_B\backslash\sum(y^*)$.  Hence, $F$ is $\lambda^+$-negligible in $X_B$.
\end{proof}

\begin{lem}
Let $C\subset B$ be saturated sets satisfying the hypotheses of Proposition $2.3$.
Then for any $\sigma$-compact set
$F\subset P_C\backslash P_C^{(\lambda)}$ there exists a saturated set $\Lambda\subset B$ containing $C$ such that $\Lambda\backslash C$ is countable and for all $x\in F$ the sets
$\pi^\Lambda_{\Lambda\backslash C}((\pi_C^\Lambda)^{-1}(x)\cap P_\Lambda)$, $\pi^\Lambda_{\Lambda\backslash C}((\pi_C^\Lambda)^{-1}(f_C(x))\cap P_\Lambda)$
are nowhere dense in $X_{\Lambda\backslash C}$. Moreover, we can suppose that $\Lambda$ contains $\Gamma$ for a given saturated set $\Gamma\subset B$ containing $C$ with $\Gamma\backslash C$ countable.
\end{lem}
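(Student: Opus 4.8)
The plan is to construct $\Lambda=C\cup D$ by a countable recursion that enlarges $C$ by finite packets of coordinates, where at each step Lemma 2.2 is used to punch a \emph{global hole} into the relevant compact piece of $P_B$: a basic open box, depending on finitely many coordinates, that meets no point of that piece at all. The decisive feature is that such a hole misses the whole compact set simultaneously, rather than one fibre at a time, so the required fibrewise nowhere density comes out uniformly over the (possibly uncountable) set $F$. Achieving this uniformity is exactly the difficulty, and it is dissolved by the global character of the hole produced by Lemma 2.2.

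First I would reduce the statement. Since $C$ is $f$-admissible, $\pi_C\circ f=f_C\circ\pi_C$, and $P_C^{(\lambda)}=\pi_C(P^{(\lambda)})$ with $f(P^{(\lambda)})=P^{(\lambda)}$, we get $f_C(P_C^{(\lambda)})=P_C^{(\lambda)}$, so $f_C(F)\subset P_C\backslash P_C^{(\lambda)}$ is again $\sigma$-compact; replacing $F$ by $F\cup f_C(F)$ it suffices to make the single family $\pi^\Lambda_{\Lambda\backslash C}((\pi_C^\Lambda)^{-1}(x)\cap P_\Lambda)$, $x\in F$, nowhere dense, which then covers both assertions. Write $F=\bigcup_n F_n$ with $F_n\subset P_C\backslash P_C^{(\lambda)}$ compact and set $Q_n=P_B\cap(\pi_C^B)^{-1}(F_n)$. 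Each $Q_n$ is compact and, since $\pi_C^B(P_B^{(\lambda)})=P_C^{(\lambda)}$ is disjoint from $F_n$, it is contained in $P_B\backslash P_B^{(\lambda)}$; hence by Proposition \ref{pro}(ii) it is $\mu$-negligible in $X_B$. Using the identity $\pi^B_{\Lambda\backslash C}(P_B\cap(\pi_C^B)^{-1}(x))=\pi^\Lambda_{\Lambda\backslash C}(P_\Lambda\cap(\pi_C^\Lambda)^{-1}(x))$, it is then enough to find a countable $D\subset B\backslash C$ such that $\pi^B_{D}$ of the fibre of $Q_n$ over each $x$ is nowhere dense in $X_D$ for every $n$.

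The heart of the matter is the atomic step. Fix $n$ and a set $C^{*}$ with $C\subset C^{*}\subset B$, $C^{*}\backslash C$ countable, so that $|C^{*}|=\max\{\lambda,\aleph_0\}<\mu$. By Proposition \ref{pro}(iii) every fibre $(\pi_C^B)^{-1}(x)\cap P_B$ projects to a $\mu$-negligible subset of $X_{B\backslash C}$; since a slice of a $\mu$-negligible set is again $\mu$-negligible (a closed $G_\beta$-set of the slice with $\beta<\mu$ pulls back to a closed $G_{\max\{\beta,\aleph_0\}}$-set inside it), every fibre $(\pi_{C^{*}}^B)^{-1}(z)\cap Q_n$ projects to a $\mu$-negligible subset of $X_{B\backslash C^{*}}$. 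Hence Lemma 2.2, applied inside the product $X_B$ with $\tau=\mu$ to the closed set $Q_n$ and the coordinate set $C^{*}$, gives $\pi^B_{B\backslash C^{*}}(Q_n)\neq X_{B\backslash C^{*}}$. As $Q_n$ is compact this projection is closed, so its complement contains a basic open box: there are a finite $E\subset B\backslash C^{*}$ and a nonempty open $W\subset X_E$ with $(\pi^B_E)^{-1}(W)\cap Q_n=\varnothing$, i.e. no point of $Q_n$ has its $E$-coordinates in $W$.

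Finally I would run the recursion with bookkeeping. Start with a countable $D_0\supset\Gamma\backslash C$ so that $\Lambda\supset\Gamma$. At a typical stage one has a countable $D_k$ with $C\cup D_k$ $f$-admissible; enumerate the countably many basic boxes of $X_{D_k}$ and, dovetailing over $k$, over these boxes $V$, and over $n$, treat one pair $(V,n)$ at a time: apply the atomic step with $C^{*}=C\cup D_k$ to obtain $E$ and $W$, and put $D_{k+1}=D_k\cup E\cup\bigcup_{\alpha\in E}B(\alpha)$, where $B(\alpha)\subset B$ are the countable $f$-admissible sets of Proposition \ref{pro}(iv). Let $D=\bigcup_k D_k$ and $\Lambda=C\cup D$; then $D$ is countable, $\Lambda$ is a union of $C$ with countable $f$-admissible sets and is therefore $f$-admissible, and $\Lambda\supset\Gamma$. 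For the nowhere density, observe that the product box $V\times W$ over the disjoint coordinate sets $\dom V\subset D_k$ and $E$ is a nonempty relatively open subset of the cylinder over $V$ that, by the global hole, misses $\pi^B_{D}(Q_n)$ and hence misses the projection of every fibre. Since each basic box $V$ of $X_D$ lies in some $X_{D_k}$ and is treated at some stage, each $\pi^B_{D}((\pi_C^B)^{-1}(x)\cap Q_n)$ has empty interior, so it is nowhere dense for every $x\in F_n$ and every $n$; this is precisely the asserted property for $x$ and for $f_C(x)$. The step I expect to be the main obstacle is exactly the uniformity over the uncountable $F$: creating a thinning coordinate separately for each $x$ cannot be amalgamated into one countable $D$, and the construction only succeeds because Lemma 2.2 yields a hole disjoint from the entire compact set $Q_n$, after which the sole remaining care is the bookkeeping that schedules every basic box of the not-yet-built space $X_D$ together with the maintenance of $f$-admissibility through Proposition \ref{pro}(iv).
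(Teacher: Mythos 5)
Your proof is correct and takes essentially the same approach as the paper's: both reduce to the $\sigma$-compact set $F\cup f_C(F)$, pull it back to $\mu$-negligible compact pieces inside $P_B$ via Proposition 2.3, and then apply Lemma 2.2 countably many times in coordinates kept disjoint from everything chosen so far (closing off under the countable $f$-admissible sets $B(\alpha)$ to preserve admissibility), so that every basic box of $X_{\Lambda\backslash C}$ contains a sub-box missing the relevant projections. The only differences are organizational --- the paper indexes the applications of Lemma 2.2 by a disjoint sequence of countable sets $\{C_n'\},\{C_n''\}$ instead of scheduling (box, piece) pairs, and it leaves implicit both the final nowhere-density verification and the slicing argument showing the fibres over the enlarged coordinate sets remain $\mu$-negligible, which you spell out correctly.
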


\begin{proof}
Let $F'=f_C(F)$. Since $C$ is $f$-admissible, $f_C(P_C^{(\lambda)})=P_C^{(\lambda)}$. Hence, $F\cup F'$ is a $\sigma$-compact subset of $P_C\backslash P_C^{(\lambda)}$. Represent $F\cup F'$ as a countable union of compact sets $F_n\subset P_C\backslash P_C^{(\lambda)}$.
Suppose that $\Gamma\subset B$ is a saturated set containing $C$ with $\Gamma\backslash C$ countable. Then each
$K_n=(\pi_C^\Gamma)^{-1}(F_n)\cap P_\Gamma$ is a compact subset of $P_\Gamma\backslash P_\Gamma^{(\lambda)}$.
Consequently, all $L_n=(\pi_\Gamma^B)^{-1}(K_n)\cap P_B$ are contained in $P_B\backslash P_B^{(\lambda)}$. We claim that for every $n$ and $z\in K_n$ the sets $\pi^B_{B\backslash\Gamma}(L_n(z))$ are $\lambda^+$-negligable subsets of $X_{B\backslash\Gamma}$, where $L_n(z)=(\{z\}\times X_{B\backslash\Gamma})\cap L_n$.
Indeed, otherwise some $\pi^B_{B\backslash\Gamma}(L_n(z))$ would contain a closed $G_\nu$-subset of $X_{B\backslash\Gamma}$ with $\nu<\lambda^+$, and using that $\{z\}\times X_{B\backslash\Gamma}$ is a $G_\lambda$-set in $X_B$ (see the proof of Proposition 2.3), one can show that $L_n(z)$ contains a closed $G_\lambda$-subset of $X_B$. Hence, $L_n(z)\subset P_B^{(\lambda)}$, which contradicts the inclusion  $L_n(z)\subset L_n\subset P_B\backslash P_B^{(\lambda)}$.

Therefore, according to Lemma 2.2,
$\pi^B_{B\backslash\Gamma}(L_n)\neq X_{B\backslash\Gamma}$ for every $n$, and there exist countable sets $B_1(n)\subset B\backslash\Gamma$
such that $\pi^B_{B_1(n)}(L_n)\neq X_{B_1(n)}$. By Proposition 2.1, we can assume that each $B_1(n)$ is saturated.
 Then $B_1=\bigcup_n B_1(n)$ is a countable saturated subset of $B\backslash\Gamma$ with
$\pi^B_{B_1}(L_n)\neq X_{B_1}$ for all $n$. Moreover, $\Lambda(1)=\Gamma\cup B_1$ is also a countable saturated set.

Now, let $K_n^1=(\pi^{\Lambda(1)}_\Gamma)^{-1}(K_n)\cap P_{\Lambda(1)}$ and $L_n^1(z)=(\{z\}\times X_{B\backslash\Lambda(1)})\cap L_n$ for every
$z\in K_n^1$. As above, we can show that $\pi^B_{B\backslash\Lambda(1)}(L_n^1(z))$ are $\lambda^+$-negligible subsets of $X_{B\backslash\Lambda(1)}$ for all $n$ and $z\in K_n^1$. Since $\pi^B_{\Lambda(1)}(L_n)=K_n^1$, we can apply Lemma 2.2 to conclude that
$\pi^B_{B\backslash\Lambda(1)}(L_n)\neq X_{B\backslash\Lambda(1)}$ for every $n$. Hence, there exist a sequence $\{B_2(n)\}$ of countable saturated subsets of $B\backslash\Lambda(1)$ with $\pi^B_{B_2(n)}(L_n)\neq X_{B_2(n)}$. This implies $\pi^B_{B_2}(L_n)\neq X_{B_2}$ for all $n$, where $B_2=\bigcup_n B_n(n)$. Then $\Lambda(2)=\Gamma\cup B_1\cup B_2$ is a saturated subset of $B$ such that $B_2$ and $B_1$ are disjoint subsets of $B\backslash\Gamma$. In this way we construct a sequence $\{B_k\}$ of disjoint countable saturated subsets of $B\backslash\Gamma$ satisfying the following conditions:
\begin{itemize}
\item[(a)] $B_{k+1}\subset B\backslash\bigcup_{i\leq k}(\Gamma\cup B_i)$;
\item[(b)] $\pi^B_{B_k}(L_n)\neq X_{B_k}$ for all $n$ and $k$.
\end{itemize}
Let $\Lambda$ be the union of $\Gamma$ and all $B_k$. For every $n$ we have $L_n=(\pi^B_C)^{-1}(F_n)\cap X_B$. So,
$\pi^B_\Lambda(L_n)=(\pi^\Lambda_C)^{-1}(F_n)\cap P_\Lambda$ and
$\pi^\Lambda_{\Lambda\backslash C}((\pi_C^\Lambda)^{-1}(F_n)\cap P_\Lambda)=\pi^B_{\Lambda\backslash C}(L_n)$. Suppose  $\pi^B_{\Lambda\backslash C}(L_n)$ contains an open subset of $X_{\Lambda\backslash C}$ for some $n$.
Since $B_k$ are disjoint subsets of $\Lambda\backslash C$, there exists $k$ with
$\pi^{\Lambda\backslash C}_{B_k}(\pi^B_{\Lambda\backslash C}(L_n))=X_{B(k)}$. On the other hand, $\pi^{\Lambda\backslash C}_{B_k}(\pi^B_{\Lambda\backslash C}(L_n))=\pi^B_{B_k}(L_n)$, which contradicts condition $(b)$. Therefore, $\pi^\Lambda_{\Lambda\backslash C}((\pi_C^\Lambda)^{-1}(F_n)\cap P_\Lambda)$ are nowhere dense subsets of $X_{\Lambda\backslash C}$.
This implies that the sets $\pi^\Lambda_{\Lambda\backslash C}((\pi_C^\Lambda)^{-1}(x)\cap P_\Lambda)$, $x\in F\cup F'$, are also nowhere dense in $X_{\Lambda\backslash C}$.
\end{proof}

\begin{thm}\cite{kr}
Let $X$ and $Y$ be compact, perfect zero-dimensional metric spaces, and let $P$ and $K$ be closed nowhere dense subsets of $X$ and $Y$, respectively. If $f$ is a homeomorphism between $P$ and $K$, then there exists a homeomorphism between $X$ and $Y$ extending $f$.
\end{thm}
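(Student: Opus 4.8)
The plan is to prove this by a back-and-forth construction in the spirit of Knaster and Reichbach, producing two refining sequences of finite clopen partitions of $X$ and $Y$ whose meshes tend to $0$, together with mesh-compatible bijections between the partitions that agree with $f$ on the traces of $P$ and $K$; the desired extension is then read off as the limit. The only structural facts I need are that $X$ and $Y$, being nonempty compact perfect zero-dimensional metric spaces, are Cantor sets (Brouwer), that the same holds for every nonempty clopen subset, that such a set splits into arbitrarily many nonempty clopen pieces of prescribed small diameter, and that a clopen subset of the closed set $P$ extends to a clopen subset of $X$ (by zero-dimensionality and normality of $X$), and symmetrically for $K\subset Y$.

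First I would fix compatible metrics and construct, by induction on $n$, finite clopen partitions $\mathcal U_n$ of $X$ and $\mathcal V_n$ of $Y$ together with bijections $\phi_n\colon\mathcal U_n\to\mathcal V_n$ subject to the invariants: (a) $\mathcal U_{n+1}$ refines $\mathcal U_n$, $\mathcal V_{n+1}$ refines $\mathcal V_n$, and $\phi_{n+1}$ refines $\phi_n$, in the sense that $U'\subset U$ implies $\phi_{n+1}(U')\subset\phi_n(U)$; (b) $\mesh\mathcal U_n<2^{-n}$ and $\mesh\mathcal V_n<2^{-n}$; (c) for each $U\in\mathcal U_n$ one has $U\cap P\neq\varnothing$ if and only if $\phi_n(U)\cap K\neq\varnothing$, and in that case $f(U\cap P)=\phi_n(U)\cap K$. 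At the inductive step the subdivision of the $P$-meeting cells is dictated by $f$: since $f$ and $f^{-1}$ are uniformly continuous on the compacta $P$ and $K$, I can choose a clopen partition of $P$ into sets of small diameter whose $f$-images also have small diameter, lift it (and its $f$-image) to clopen partitions of $X$ (respectively $Y$) refining the cells meeting $P$ (respectively $K$), and thereby keep (c) exact.

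The remaining freedom concerns the cells disjoint from $P$ and $K$. Inside a cell $U$ that meets $P$, the complement of the part still meeting $P$ is a clopen set missing $P$, and likewise $\phi_n(U)$ contains a clopen set missing $K$; being Cantor sets, I partition each into \emph{equally many} nonempty clopen pieces of diameter $<2^{-(n+1)}$ and match them by an arbitrary bijection, handling the cells already disjoint from $P$ and $K$ the same way. Here I use that a Cantor set can be cut into any prescribed finite number of clopen pieces, so the two counts can always be equalized; this balancing must be carried out simultaneously with the $f$-driven refinement while preserving (b) and (c), and it is the delicate point of the argument.

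Finally, for $x\in X$ let $U^n(x)$ denote the cell of $\mathcal U_n$ containing $x$; by (a) the sets $\phi_n(U^n(x))$ are nested and, by (b), their diameters tend to $0$, so $\bigcap_n\phi_n(U^n(x))$ is a single point, which I call $F(x)$. The nesting and mesh conditions make $F$ a continuous bijection of $X$ onto $Y$ whose inverse is produced symmetrically, hence a homeomorphism, while (c) forces $F(x)=f(x)$ for every $x\in P$, so $F$ extends $f$. I expect the main obstacle to be exactly the inductive step: arranging the count-matching of the free clopen pieces in lockstep with the rigid, $f$-prescribed subdivision of the $P$-meeting cells, so that all three invariants survive into the next level.
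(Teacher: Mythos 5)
The paper offers no proof of this statement at all: it appears as a quoted classical result with a citation to Knaster--Reichbach \cite{kr}, so your proposal can only be measured against the classical argument --- which is indeed the back-and-forth scheme you describe (refining finite clopen partitions of $X$ and $Y$ with mesh tending to zero, partition bijections agreeing with $f$ on the traces of $P$ and $K$, and the extension read off as a limit). Your limit step is sound: the nested sets $\phi_n(U^n(x))$ shrink to a point, the resulting map is uniformly continuous, the symmetric construction gives its inverse, and invariant (c) forces $F|P=f$.

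The genuine gap is exactly at the step you flag as delicate, and it is a missing idea rather than bookkeeping: your argument never uses the hypothesis that $P$ and $K$ are \emph{nowhere dense}, and a proof that never invokes it cannot be correct, since the statement fails without it (take $P$ a proper nonempty clopen subset of $\mathfrak{C}$ and $K$ a nowhere dense Cantor subset of $\mathfrak{C}$; any homeomorphic extension of a homeomorphism $P\to K$ would carry the open set $P$ onto the non-open set $K$). Concretely, here is where your induction can die. Inside a matched pair $U\leftrightarrow V=\phi_n(U)$, the small clopen cells $U_1,\dots,U_m$ lifted around the pieces $Q_i$ of $U\cap P$ may cover all of $U$ (this happens when $U\cap P$ is $2^{-(n+1)}$-dense in $U$, which is compatible with nowhere density), so the free remainder $U\setminus\bigcup_i U_i$ is empty; on the other side each $V_i\supset f(Q_i)$ has diameter $<2^{-(n+1)}$, so $\bigcup_i V_i$ lies in the $2^{-(n+1)}$-neighborhood of $K\cap V$, and if $V$ contains a point farther than that from $K\cap V$ (again perfectly possible, since $f$ need not respect the metrics), the remainder $V\setminus\bigcup_i V_i$ is a nonempty clopen set disjoint from $K$ with no available partner --- invariant (c) cannot be preserved and no bijection $\phi_{n+1}$ exists. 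Your phrase ``being Cantor sets'' silently assumes both remainders are nonempty, which is precisely what is not automatic. The repair is where nowhere density must enter: before lifting the $f$-prescribed partition of $U\cap P$, carve out a nonempty clopen set $W\subset U\setminus P$ (it exists because $P$ is closed and nowhere dense and $U$ is a nonempty clopen subset of a zero-dimensional compactum), extend the $Q_i$ to disjoint small clopen sets inside $U\setminus W$ only, and do the symmetric thing inside $V$; then both remainders contain nonempty clopen sets, both are Cantor sets, and your count-equalization by further splitting goes through, making the rest of your proof complete and essentially the argument of \cite{kr}.
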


\section{Homeomorphisms on product spaces}

For any space $X$ let $\mathcal H(X)$ denote the set of all homeomorphisms of $X$ equipped with the compact-open topology. In this section we prove that $\mathcal H(X)$, where $X$ is a product of compact metric spaces, is an absolute extensor for compact 0-dimensional spaces. This is easily seen (see the proof of Theorem 3.4 below) when $X$ is a countable product. So, the interesting case is when $X$ is an uncountable product of metric compacta.
We use the technique developed in \cite{m}.

\begin{pro}\label{hom}
Let $K\subset\mathcal H(X)$ be a Lindel$\ddot{o}$f subset, where $X=\prod_{\alpha\in A}X_\alpha$ is a product of compact metric spaces with $|A|=\tau>\aleph_0$. Then
$A$ can be covered by a family of sets $\{A(\alpha):\alpha\in\omega(\tau)\}$ such that for every $\alpha$ we have:
\begin{itemize}
\item $A(\alpha)=\bigcup_{\gamma<\alpha}A(\gamma)$ if $\alpha$ is a limit ordinal;
\item $A(\alpha)\subset A(\alpha+1)$ and $A(\alpha+1)\backslash A(\alpha)$ is countable for all $\alpha$;
\item For every $f\in K$ and $\alpha\in A$ there is $f_\alpha\in\mathcal H(X_{A(\alpha)})$ with $\pi_{A(\alpha)}\circ f=f_\alpha\circ\pi_{A(\alpha)}$.
\end{itemize}
\end{pro}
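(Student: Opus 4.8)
The plan is to reduce the proposition to a single \emph{closing-off} principle and then to build the chain $\{A(\alpha)\}$ by transfinite recursion. Call a set $B\subset A$ \emph{$K$-admissible} if for every $f\in K$ there is $f_B\in\mathcal H(X_B)$ with $\pi_B\circ f=f_B\circ\pi_B$; this is precisely the third bullet with $A(\alpha)=B$. Recall the standard fact (the single-map case underlying Proposition 2.1) that a continuous map $g\colon X\to M$ into a metric space depends on only countably many coordinates; write $\mathrm{supp}(g)\subset A$ for this countable set. For $f\in\mathcal H(X)$ and $\beta\in A$ put $S_f(\beta)=\mathrm{supp}(\pi_\beta\circ f)\cup\mathrm{supp}(\pi_\beta\circ f^{-1})$. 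A countable set $B$ is $K$-admissible as soon as $S_f(\beta)\subset B$ for all $\beta\in B$ and all $f\in K$, since then both $f$ and $f^{-1}$ descend to mutually inverse self-maps of $X_B$ commuting with $\pi_B$. Thus everything hinges on controlling, for the whole family $K$ at once, the coordinates appearing in the sets $S_f(\beta)$.

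The heart of the matter is the following countability lemma, which is where the technique of \cite{m} enters and which I expect to be the main obstacle: for every fixed $\beta\in A$ the set $\mathrm{Supp}(\beta)=\bigcup_{f\in K}S_f(\beta)$ is countable. To prove it I would use the evaluation map $e_\beta\colon K\times X\to X_\beta$, $e_\beta(f,x)=\pi_\beta(f(x))$, which is continuous because $X$ is compact and $K\subset\mathcal H(X)$ carries the compact-open topology; the counterpart for $f^{-1}$ is handled by replacing $K$ with the (still Lindel\"of) family $K^{-1}$. Since $X$ is compact and $K$ is Lindel\"of, $K\times X$ is Lindel\"of. Suppose, toward a contradiction, that uncountably many coordinates are essential for $e_\beta$; passing to an uncountable subfamily I may fix $\varepsilon>0$ and, for each such coordinate $\alpha$, witnesses $f_\alpha\in K$ and points $a^\alpha,b^\alpha\in X$ differing only in the $\alpha$-th coordinate with $\mathrm{dist}\big(e_\beta(f_\alpha,a^\alpha),e_\beta(f_\alpha,b^\alpha)\big)\geq\varepsilon$. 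The uncountable set $\{(f_\alpha,a^\alpha)\}$ in the Lindel\"of space $K\times X$ has a condensation point $q$; choosing a basic neighbourhood $\mathcal W\times\mathcal V$ of $q$ on which $e_\beta$ has oscillation $<\varepsilon$, where $\mathcal V$ restricts only a \emph{finite} coordinate set $F\subset A$, I can pick $\alpha\notin F$ with $(f_\alpha,a^\alpha)\in\mathcal W\times\mathcal V$. Then $b^\alpha\in\mathcal V$ as well, since the two points differ only at $\alpha\notin F$; hence both values $e_\beta(f_\alpha,a^\alpha)$ and $e_\beta(f_\alpha,b^\alpha)$ lie in a set of diameter $<\varepsilon$, a contradiction. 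So $\mathrm{Supp}(\beta)$ is countable.

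Granting this, the closing-off is routine: for a countable $C\subset A$ set $\Theta(C)=C\cup\bigcup_{\beta\in C}\mathrm{Supp}(\beta)$, again a countable set, and let $\overline C=\bigcup_{n}\Theta^{n}(C)$. Then $\overline C$ is countable and $\Theta$-closed, hence $K$-admissible by the first paragraph. Now fix an enumeration $A=\{a_\xi:\xi<\tau\}$ and construct $\{A(\alpha):\alpha<\omega(\tau)\}$ recursively: put $A(\alpha)=\bigcup_{\gamma<\alpha}A(\gamma)$ at limit ordinals (the continuity clause), and let $A(\alpha+1)$ be the $\Theta$-closure of $A(\alpha)$ together with the first $a_\xi$ not yet used. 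Because $A(\alpha)$ is already $\Theta$-closed, only countably many new coordinates are needed to re-close after adjoining one point, so $A(\alpha+1)\setminus A(\alpha)$ is countable; this yields the first two bullets. Each $A(\alpha)$ is $\Theta$-closed, hence $K$-admissible, which gives the third bullet, the maps $f_{A(\gamma)}$ being compatible at limits so that their direct limit is $f_{A(\alpha)}$. Finally, adjoining the least unused $a_\xi$ at each successor guarantees $\bigcup_{\alpha<\omega(\tau)}A(\alpha)=A$, while $|A(\alpha)|<\tau$ for $\alpha<\omega(\tau)$ keeps the bookkeeping consistent with $|A|=\tau$. The only genuinely delicate step, and the sole place where the Lindel\"of hypothesis is used, is the countability lemma of the second paragraph; everything else is a standard club construction.
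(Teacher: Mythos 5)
Your proposal is correct, but the key step is proved by a genuinely different argument from the paper's. Both proofs share the same outer skeleton: a countable closing-off operator whose closed sets carry the required factorizations for all $f\in K$ simultaneously, followed by a transfinite recursion (the paper takes $A(\alpha+1)=A(\alpha)\cup\Lambda(\alpha+1)$, where $\Lambda(\alpha)$ is the closure of the singleton $\{\alpha\}$, so the covering of $A$ is automatic and your least-unused-index bookkeeping is not needed). The difference is in how Lindel\"ofness of $K$ yields countable coordinate control. The paper (Claims 3.2--3.3) works in $C(X,X_B)$ with the limitation topology, which coincides with the compact-open topology by Toru\'{n}czyk's theorem: for each $n$, Lindel\"ofness extracts countably many $f_{ni}\in K$ (and $g_{ni}$ for $K^{-1}$) whose projections are $\mathcal U_n$-close to those of all members of $K$; these countably many maps are factored through a countable $\Gamma(B)$ via Proposition 2.1, and letting $\mathrm{mesh}(\mathcal U_n)\to 0$ forces every $\pi_B\circ f$ to factor through $\pi_{\Gamma(B)}$. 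You instead prove the equivalent ``uniform countable support'' statement, that $\bigcup_{f\in K}\bigl(\mathrm{supp}(\pi_\beta\circ f)\cup\mathrm{supp}(\pi_\beta\circ f^{-1})\bigr)$ is countable, directly by a condensation-point/oscillation argument on the evaluation map $e_\beta\colon K\times X\to X_\beta$, using that $K\times X$ is Lindel\"of. Your route is more self-contained (no limitation topology, no appeal to the Bowers and Toru\'{n}czyk results), at the cost of being longer where the paper leans on citations. Three details to tighten, none fatal: (1) the condensation-point argument must be applied to the \emph{indexed} family $\{(f_\alpha,a^\alpha)\}_\alpha$, since $\alpha\mapsto(f_\alpha,a^\alpha)$ need not be injective --- the usual Lindel\"of covering argument works verbatim for indices; (2) your closing-off step uses, in both directions, that the support of a continuous map on a compact product equals its set of essential coordinates and that the map factors through that set --- this is the load-bearing ``standard fact'' and should be stated explicitly; (3) $K^{-1}$ is Lindel\"of because $\mathcal H(X)$ is a topological group, exactly as the paper notes.
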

\begin{proof}
Let $B\subset A$ be a countable set. Take a sequence of open covers $\{\mathcal U_n\}_{n\geq 1}$ of $X_B$ such that $\rm{diam}(\mathcal U_n)<1/n$ for all $n$. Since $X_B$ is metrizable and $X$ is compact, the compact-open topology on the function space $C(X,X_B)$ coincides with the limitation topology \cite{to}. Recall that
$U\subset C(X,X_B)$ is open with respect to the limitation topology if for every $f\in U$ there is $\mathcal V\in\rm{cov}(X_B)$  such that $U$ contains the set
$B(f,\mathcal V)=\{g\in C(X,X_B):g{~}\mbox{is}{~}\mathcal V-\mbox{close to}{~}f\}.$ Here, $\rm{cov}(X_B)$ is the family of all open covers of $X_B$ and $g$ is $\mathcal V$-close $f$ provided for any $x\in X$ there is $V\in\mathcal V$ containing both points $f(x)$ and $g(x)$. In particular, every $B(f,\mathcal V)$ contains a neighborhood $B_*(f,\mathcal V)$ of $f$, see \cite{bo1}. There exists a natural map $p_B:\mathcal H(X)\to C(X,X_B)$,
$p_B(h)=\pi_B\circ h$, which is continuous when both $\mathcal H(X)$ and $C(X,X_B)$ carry the compact-open topology.

\begin{claim}\label{1}
There is a countable set $\Gamma(B)\subset A$ containing $B$ such that for every $f\in K$ there exist
$f_{\Gamma(B)},g_{\Gamma(B)}\in C(X_{\Gamma(B)},X_B)$ with $\pi_{B}\circ f=f_{\Gamma(B)}\circ\pi_{\Gamma(B)}$ and
$\pi_{B}\circ f^{-1}=g_{\Gamma(B)}\circ\pi_{\Gamma(B)}$.
\end{claim}
Since for each $n$ the family $\{B_*(\pi_{B}\circ f,\mathcal U_n):f\in K\}$ is an open cover of $p_B(K)$, there is a sequence $\{f_{ni}\}_{i\geq 1}\subset\mathcal H(X)$ such that
$\{B_*(\pi_{B}\circ f_{ni},\mathcal U_n):i\geq 1\}$ covers $p_B(K)$. Because $\mathcal H(X)$ is a topological group, the set $K^{-1}=\{f^{-1}:f\in K\}$ is also Lindel$\ddot{o}$f. Hence,
there exists a sequence $\{g_{ni}\}_{i\geq 1}\subset\mathcal H(X)$ for each $n\geq 1$ such that
$\{B_*(\pi_{B}\circ g_{ni},\mathcal U_n):i\geq 1\}$ covers $p_B(K^{-1})$.
Because every continuous function on $X$ depends on countably many coordinates (for example, see \cite{en1}), there are a countable set $\Gamma(B)$ containing $B$ and corresponding to each $n$ sequences $\{\varphi_{ni}\}_{i\geq 1}\subset C(X_{\Gamma(B)},X_B)$ and $\{\phi_{ni}\}_{i\geq 1}\subset C(X_{\Gamma(B)},X_B)$
such that $\pi_{B}\circ f_{ni}=\varphi_{ni}\circ\pi_{\Gamma(B)}$ and $\pi_{B}\circ g_{ni}=\phi_{ni}\circ\pi_{\Gamma(B)}$
for all $n,i$. Then for every $f\in K$ and $n$ there exists $i_n$ such that the map $\pi_B\circ f$ is $\mathcal U_n$-close to $\pi_{B}\circ f_{ni_n}$.
Because $\pi_{B}\circ f_{ni_n}=\varphi_{ni_n}\circ\pi_{\Gamma(B)}$, we obtain that for any
$x,y\in X$ with $\pi_{\Gamma(B)}(x)=\pi_{\Gamma(B)}(y)$ we have $\pi_{B}(f(x))=\pi_{B}(f(y))$. This means that there exists a map $f_{\Gamma(B)}\in C(X_{\Gamma(B)},X_B)$ with $\pi_{B}\circ f=f_{\Gamma(B)}\circ\pi_{\Gamma(B)}$. Similarly, there exists a map $g_{\Gamma(B)}\in C(X_{\Gamma(B)},X_B)$ with
$\pi_{B}\circ f^{-1}=g_{\Gamma(B)}\circ\pi_{\Gamma(B)}$.

\begin{claim}\label{2}
For every countable set $B\subset A$ there is a countable set $\Lambda(B)\subset A$ containing $B$ such that for every $f\in K$ there exist
homeomorphisms
$f_{\Lambda(B)},g_{\Lambda(B)}\in \mathcal H(X_{\Lambda(B)})$ with $\pi_{\Lambda(B)}\circ f=f_{\Lambda(B)}\circ\pi_{\Lambda(B)}$ and
$\pi_{\Lambda(B)}\circ f^{-1}=g_{\Lambda(B)}\circ\pi_{\Lambda(B)}$.
\end{claim}
Indeed, using the notations from Claim \ref{1}, we construct an increasing sequence $B(n)$ of countable subsets of $A$ such that $B(0)=B$ and
$B(n)=\Gamma(B(n-1))$ for every $n\geq 1$. Let $\Lambda(B)=\bigcup_{n\geq 1}B(n)$. Then for every $f\in K$ there exist maps
$f_n$ and $g_n$ in $C(X_{B(n)},X_{B(n-1)})$ such that

$$\pi_{B(n-1)}\circ f=f_n\circ\pi_{B(n)}{~}\hbox{and}{~} \pi_{B(n-1)}\circ f^{-1}=g_n\circ\pi_{B(n)}.$$

The last condition implies that if $f\in K$, then for every $x,y\in X$ with $\pi_{\Lambda(B)}(x)=\pi_{\Lambda(B)}(y)$ we have
$\pi_{\Lambda(B)}(f(x))=\pi_{\Lambda(B)}(f(y))$ and $\pi_{\Lambda(B)}(f^{-1}(x))=\pi_{\Lambda(B)}(f^{-1}(y))$. Therefore, there exist homeomorphisms
$f_{\Lambda(B)},g_{\Lambda(B)}\in \mathcal H(X_{\Lambda(B)})$ satisfying the required conditions.

Now, for every $\alpha<\omega(\tau)$ take a countable set $\Lambda(\alpha)\subset A$ satisfying the hypotheses of Claim \ref{2} with  $B=\{\alpha\}$.
Let $A(\alpha)=\bigcup_{\gamma<\alpha}\Lambda(\gamma)$ if $\alpha$ is a limit ordinal, and $A(\alpha)=A(\alpha-1)\cup\Lambda(\alpha)$ otherwise. Since for every $\alpha\in A$ and $f\in K$ there exist $f_{\Lambda(\alpha)},g_{\Lambda(\alpha)}\in \mathcal H(X_{\Lambda(\alpha)})$ with $\pi_{\Lambda(\alpha)}\circ f=f_{\Lambda(\alpha)}\circ\pi_{\Lambda(\alpha)}$ and
$\pi_{\Lambda(\alpha)}\circ f^{-1}=g_{\Lambda(\alpha)}\circ\pi_{\Lambda(\alpha)}$, we have $\pi_{A(\alpha)}(f(x))=\pi_{A(\alpha)}(f(y))$ and
$\pi_{A(\alpha)}(f^{-1}(x))=\pi_{A(\alpha)}(f^{-1}(y))$ for any pair $x,y\in X$ with $\pi_{A(\alpha)}(x)=\pi_{A(\alpha)}(y)$. This  yields a homeomorphism $f_\alpha\in\mathcal H(X_{A(\alpha)})$ such that $\pi_{A(\alpha)}\circ f=f_\alpha\circ\pi_{A(\alpha)}$.
\end{proof}

The next theorem is an analogue of Mednikov's result \cite[Corollary 3]{m} stating that $\mathcal H([0,1]^A)$ is an absolute extensor for compact spaces.
\begin{thm}
Let $X=\prod_{\alpha\in A}X_\alpha$ be a product of compact metric spaces. Then $\mathcal H(X)$ is an absolute extensor
for zero-dimensional compact spaces.
\end{thm}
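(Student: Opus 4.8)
The plan is to verify the defining property of an absolute extensor directly: given a zero-dimensional compact space $Z$, a closed subset $Z_0\subset Z$, and a continuous map $g:Z_0\to\mathcal{H}(X)$, I want to produce a continuous extension $G:Z\to\mathcal{H}(X)$. The image $K=g(Z_0)$ is compact, hence Lindel\"of, so Proposition \ref{hom} applies and supplies a cover $\{A(\alpha):\alpha<\omega(\tau)\}$ of $A$ with the stated properties. For each $\alpha$ and each $f\in K$ the factorization $\pi_{A(\alpha)}\circ f=f_\alpha\circ\pi_{A(\alpha)}$ determines $f_\alpha\in\mathcal{H}(X_{A(\alpha)})$ uniquely, since $\pi_{A(\alpha)}$ is surjective, and the assignment $f\mapsto f_\alpha$ is routinely continuous in the compact-open topology. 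Composing with $g$ gives continuous maps $g_\alpha:Z_0\to\mathcal{H}(X_{A(\alpha)})$, $g_\alpha(z)=(g(z))_\alpha$, which are \emph{coherent}: $\pi^{A(\beta)}_{A(\alpha)}\circ g_\beta(z)=g_\alpha(z)\circ\pi^{A(\beta)}_{A(\alpha)}$ whenever $A(\alpha)\subset A(\beta)$. The goal is to construct, by transfinite induction on $\alpha$, continuous extensions $G_\alpha:Z\to\mathcal{H}(X_{A(\alpha)})$ of $g_\alpha$ that remain coherent, and then to glue them.

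Since the family $\{A(\alpha)\}$ is increasing, covers $A$, and satisfies $A(\alpha)=\bigcup_{\gamma<\alpha}A(\gamma)$ at limits, we have $X=\varprojlim_\alpha X_{A(\alpha)}$, and a coherent family $\{G_\alpha\}$ assembles, fiberwise over $z$, into a single homeomorphism $G(z)$ of $X$ with $\pi_{A(\alpha)}\circ G(z)=G_\alpha(z)\circ\pi_{A(\alpha)}$; continuity of $z\mapsto G(z)$ follows from continuity of the $G_\alpha$, and restricting to $Z_0$ recovers $g$ because each $G_\alpha$ extends $g_\alpha$. The limit stages of the induction are of the same formal nature: at a limit ordinal $\alpha$ the already-constructed coherent maps $\{G_\gamma\}_{\gamma<\alpha}$ induce $G_\alpha(z)=\varprojlim_{\gamma<\alpha}G_\gamma(z)$, which is a genuine homeomorphism because its inverse is the limit of the coherent family $\{G_\gamma(z)^{-1}\}$ (that each factored map is a homeomorphism, and not merely a map, is exactly what is guaranteed by the joint treatment of $f$ and $f^{-1}$ in Claims \ref{1}--\ref{2}), and it depends continuously on $z$. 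Thus both the final gluing and all limit stages are automatic once coherence is maintained.

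The content lies in the successor step, which also subsumes the base case $\alpha=0$ upon taking $X_{A(-1)}$ to be a one-point space. Writing $D=A(\alpha+1)\setminus A(\alpha)$, the set $D$ is countable, so $X_D$ is a \emph{metric} compactum and the bonding map splits as $X_{A(\alpha+1)}\cong X_{A(\alpha)}\times X_D$. A homeomorphism of $X_{A(\alpha+1)}$ lying over a prescribed $\psi\in\mathcal{H}(X_{A(\alpha)})$ necessarily has the form $(u,d)\mapsto(\psi(u),\eta(u,d))$ where $d\mapsto\eta(u,d)$ is a homeomorphism of $X_D$ for each fixed $u$; hence the set of such lifts is canonically homeomorphic to $C(X_{A(\alpha)},\mathcal{H}(X_D))$. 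Using the already-built $G_\alpha$ to trivialize, the problem of extending $g_{\alpha+1}$ over $Z$ compatibly with $G_\alpha$ reduces to extending a continuous map $\eta_0:Z_0\to C(X_{A(\alpha)},\mathcal{H}(X_D))$ to a continuous map $\eta:Z\to C(X_{A(\alpha)},\mathcal{H}(X_D))$; coherence with the immediate predecessor $G_\alpha$ then propagates to all $\beta\le\alpha$ by transitivity of the projections.

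I expect this fibered extension to be the main obstacle. Following the technique of \cite{m}, the idea is to exploit the metrizability of the fiber $X_D$ together with $\dim Z=0$: first extend the boundary datum to a \emph{neighborhood} $U$ of $Z_0$ in $Z$ (obtaining the extension on $U\times X_{A(\alpha)}$ from a neighborhood extension on the compact space $Z\times X_{A(\alpha)}$ and the tube lemma over the compact factor $X_{A(\alpha)}$), and then invoke zero-dimensionality to choose a clopen set $V$ with $Z_0\subset V\subset U$; on $V$ keep the neighborhood extension and on the complementary clopen set $Z\setminus V$ assign any admissible value, the two pieces matching continuously precisely because $V$ is clopen. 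The delicate points, where the real work concentrates, are: establishing the required neighborhood-extension property of the fiber $\mathcal{H}(X_D)$ for (possibly non-metrizable) compacta in the compact-open topology; guaranteeing that the extended maps $\eta(z)(u,\cdot)$ are honest homeomorphisms of $X_D$ rather than merely continuous maps; and preserving coherence with the lower stages $G_\beta$, $\beta\le\alpha$. It is here that the metric structure of $X_D$ and the $f$-admissibility bookkeeping built into Proposition \ref{hom} must be combined.
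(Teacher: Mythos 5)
Your overall architecture mirrors the paper's: invoke Proposition~\ref{hom} on the compact (hence Lindel\"{o}f) image $K=g(Z_0)$, split each successor bonding map as $X_{A(\alpha+1)}\cong X_{A(\alpha)}\times X_D$ with $X_D$ metrizable, and identify the lifts over a fixed homeomorphism of $X_{A(\alpha)}$ with the function space $C(X_{A(\alpha)},\mathcal H(X_D))$. But there is a genuine gap exactly where the content of the theorem lies: you never prove that a continuous map from the closed set $Z_0\subset Z$ into $C(X_{A(\alpha)},\mathcal H(X_D))$ extends over $Z$. Your plan --- extend over a neighborhood $U$ of $Z_0$ and then cut down by a clopen $V$ with $Z_0\subset V\subset U$ --- presupposes an absolute \emph{neighborhood} extensor property of $\mathcal H(X_D)$ (via the tube-lemma step, for the non-zero-dimensional compact space $Z\times X_{A(\alpha)}$), and no such property is available: $\mathcal H(X_D)$ is merely a Polish space, it need not be an ANR or ANE in any sense (for $X_D=\mathfrak{C}$ it is even totally disconnected), and in the purely zero-dimensional reading of your plan the ``neighborhood extension'' one would need for non-metrizable $Z$ is precisely the statement being proved. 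You yourself flag these items (``establishing the required neighborhood-extension property,'' ``guaranteeing that the extended maps are honest homeomorphisms'') as delicate points where the real work concentrates; they are not refinements to be checked later, they are the theorem.

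The paper closes this gap by an argument in which zero-dimensionality is used \emph{before} extending, not after. Each factor $M=C(X_{A(\alpha)},\mathcal H(X_{A(\alpha+1)\backslash A(\alpha)}))$ is a complete metric space of weight $\le\tau$, hence embeds in $l_2(\tau)$; the given map $\theta$ on $Z_0$ is first extended to a map $\eta:Z\to l_2(\tau)$, whose image is a \emph{metric} compactum; one then factors $\eta$ through a zero-dimensional metric compactum and composes with a retraction of that compactum onto the image of $Z_0$ (every nonempty closed subset of a zero-dimensional metric compactum is a retract). The resulting extension takes values only in $\theta(Z_0)\subset M$, so it automatically lands in $M$ and every value is a genuine homeomorphism --- your second worry evaporates because no new values are ever created. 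The paper also dispenses with your transfinite induction and coherence bookkeeping: it observes that the space $\mathcal H_\xi(X)$ of coherent families is \emph{homeomorphic to the product} $\mathcal H(X_{A(0)})\times\prod_{\alpha<\omega(\tau)}C(X_{A(\alpha)},\mathcal H(X_{A(\alpha+1)\backslash A(\alpha)}))$, so extension can be done factorwise and independently, a product of absolute extensors for zero-dimensional compacta being again one. (An alternative repair of your successor step would be Michael's zero-dimensional selection theorem applied to the lower semi-continuous map equal to $\{\theta(z)\}$ on $Z_0$ and to all of $M$ elsewhere; but some such argument must be supplied --- the neighborhood-plus-clopen-cutoff scheme cannot stand as written.)
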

\begin{proof}
Suppose $Y$ is a 0-dimensional compact space and $g:P\to \mathcal H(X)$ be a map, where $P$ is closed in $Y$. If $A$ is countable, then
$\mathcal H(X)$ is a complete separable metric space and we define a set-valued map $\Phi: Y\rightsquigarrow\mathcal H(X)$,
$\Phi(y)=\{g(y)\}$ if $y\in P$ and $\Phi(y)=\mathcal H(X)$ otherwise. Since $\Phi$ is lower semi-continuous
(the set $\Phi^{-1}(U)=\{y\in Y:\Phi(y)\cap U\neq\varnothing\}$ is open in $Y$ for every open $U\subset \mathcal H(X)$), by Michael's 0-dimensional selection theorem \cite{em}, $\Phi$ admits a continuous selection $\widetilde g:Y\to\mathcal H(X)$. Obviously, $\widetilde g$ extends $g$.

Assume $A$ is an uncountable set of cardinality $\tau$. Then $A$ can be covered by a family $\xi=\{A(\alpha):\alpha\in\omega(\tau)\}$ satisfying the hypotheses of Proposition \ref{hom} with $K=g(P)$.
Then for every $\alpha\in\omega(\tau)$ and $f\in K$ we have
\begin{itemize}
\item[(*)] $\pi^{A(\alpha+1)}_{A(\alpha)}\circ f_{\alpha+1}=f_\alpha\circ\pi^{A(\alpha+1)}_{A(\alpha)}$.
\end{itemize}
Denote by $\mathcal H_\xi(X)$ the subspace of $\mathcal H(X)$ consisting of all $f$ with the following property: For every $\alpha$ there is
$f_\alpha\in\mathcal H(X_{A(\alpha)})$ such that $f_\alpha$ and $f_{\alpha+1}$ satisfy $(*)$. Since $K\subset \mathcal H_\xi(X)$, it suffice to show that $\mathcal H_\xi(X)$ is an absolute extensor for 0-dimensional compacta.

Because
$X_{A(\alpha+1)}=X_{A(\alpha)}\times X_{A(\alpha+1)\backslash A(\alpha)}$, it follows from $(*)$ that $f_{\alpha+1}$ is of the form
$f_{\alpha+1}(x,y)=(f_{\alpha}(x),g(x,y))$, where $x\in X_{A(\alpha)}$, $y\in X_{A(\alpha+1)\backslash A(\alpha)}$ and $g$ is a map
from $X_{A(\alpha+1)}$ into $X_{A(\alpha+1)\backslash A(\alpha)}$ such that for any $x\in X_{A(\alpha)}$ the map
$\varphi_g(x)$, $\varphi_g(x)(y)=g(x,y)$, is a homeomorphism of $X_{A(\alpha)\backslash A(\alpha)}$. Therefore, by \cite[Theorem 3.4.9]{en},
the correspondence $\varphi_g\rightarrow g$ is a homeomorphism between $C(X_{A(\alpha)},\mathcal H(X_{A(\alpha+1)\backslash A(\alpha)}))$ and
the subset of $C(X_{A(\alpha+1)},X_{A(\alpha+1)\backslash A(\alpha)})$ consisting of all $g$ such that for each $x\in X_{A(\alpha)}$ the map $\varphi_g(x)$ belongs to $\mathcal H(X_{A(\alpha)\backslash A(\alpha)})$. Hence, the correspondence $(f_\alpha,\varphi_g)\rightarrow f_{\alpha+1}$ provides a homeomorphism between the spaces
$\mathcal H(X_{A(\alpha)})\times C(X_{A(\alpha)},\mathcal H(X_{A(\alpha)\backslash A(\alpha)}))$ and
$\mathcal H_\alpha(X_{A(\alpha+1)})$, where $\mathcal H_\alpha(X_{A(\alpha+1)})$ consists of all homeomorphisms $f_{\alpha+1}$ on $X_{A(\alpha+1)}$ satisfying equality $(*)$. This means that there is one-to-one correspondence between $\mathcal H_\xi(X)$ and the product
\begin{itemize}
\item[(**)] $\mathcal H(X_{A(0)})\times\Pi_{\alpha<\omega(\tau)}C(X_{A(\alpha)},\mathcal H(X_{A(\alpha+1)\backslash A(\alpha)})).$
\end{itemize}
This correspondence is a homeomorphism when all function spaces carry the compact-open topology.

It remains to show that each multiple in the product $(**)$ is an absolute extensor for 0-dimensional compacta. This is true for $\mathcal H(X_{A(0)})$ because
$A(0)$ is countable. To show that each $C(X_{A(\alpha)},\mathcal H(X_{A(\alpha+1)\backslash A(\alpha)}))$ is also an absolute extensor for 0-dimensional compacta,
take a pair $L\subset Z$ of 0-dimensional compacta and a map
$$\theta: L\to C(X_{A(\alpha)},\mathcal H(X_{A(\alpha+1)\backslash A(\alpha)})).$$ Since $\mathcal H(X_{A(\alpha+1)\backslash A(\alpha)})$ is a separable complete metric space and $X_{A(\alpha)}$ is a compactum, $C(X_{A(\alpha)},\mathcal H(X_{A(\alpha+1)\backslash A(\alpha)}))$ is a complete metric space. Then, as above, we can apply Michael's 0-dimensional selection theorem to find an extension
$\widetilde\theta:Z\to C(X_{A(\alpha)},\mathcal H(X_{A(\alpha+1)\backslash A(\alpha)}))$ of $\theta$.
\end{proof}

Everywhere below by $\mathfrak{C}$ we denote the Cantor set.
\begin{cor}
Let $P$ be a proper closed subset of $\mathfrak{C}^A$ and $f$ be an autohomeomorphism of $P$. Suppose there exist a proper subset $B\subset A$ and an autohomeomorphism $f_B$ of $P_B$  such that
\begin{itemize}
\item $P=P_{B}\times\mathfrak{C}^{A\backslash B}$;
\item $f_B\circ\pi_B=\pi_B\circ f$;
\item $f_B$ can be extended to a homeomorphism $\widetilde f_B\in\mathcal H(\mathfrak{C}^B)$.
\end{itemize}
Then $f$ can be extended to a homeomorphism $\widetilde f\in\mathcal H(\mathfrak{C}^A)$ such that $\widetilde f_B\circ\pi_B=\pi_B\circ\widetilde f$.
\end{cor}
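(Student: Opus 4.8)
The plan is to use the product splitting $P=P_B\times\mathfrak{C}^{A\backslash B}$ to reduce the extension of $f$ to a single application of the absolute extensor theorem established above for the product $\mathfrak{C}^{A\backslash B}$ of compact metric spaces. Writing points of $\mathfrak{C}^A=\mathfrak{C}^B\times\mathfrak{C}^{A\backslash B}$ as pairs $(x,t)$, the identity $f_B\circ\pi_B=\pi_B\circ f$ forces $f$ to have the block form
\[ f(x,t)=\bigl(f_B(x),\,g(x,t)\bigr),\qquad (x,t)\in P, \]
where $g=\pi_{A\backslash B}\circ f\colon P\to\mathfrak{C}^{A\backslash B}$ is continuous. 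Because $f$ is an autohomeomorphism of $P$ and $P=P_B\times\mathfrak{C}^{A\backslash B}$, it carries the fiber $\{x\}\times\mathfrak{C}^{A\backslash B}$ onto the fiber $\{f_B(x)\}\times\mathfrak{C}^{A\backslash B}$; hence for each fixed $x\in P_B$ the partial map $g_x:=g(x,\cdot)$ is an autohomeomorphism of $\mathfrak{C}^{A\backslash B}$.

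First I would promote $g$ to a map into a homeomorphism group. Since $\mathfrak{C}^{A\backslash B}$ is compact, hence locally compact, the exponential law \cite[Theorem 3.4.9]{en} turns the continuity of $g$ into the continuity, for the compact-open topology, of the associated map $\phi\colon P_B\to\mathcal{H}(\mathfrak{C}^{A\backslash B})$, $\phi(x)=g_x$. Note that $(P_B,\mathfrak{C}^B)$ is a pair of the required kind: $\mathfrak{C}^B$ is a zero-dimensional compactum and $P_B=\pi_B(P)$ is a compact, hence closed, subset of it. As $\mathfrak{C}^{A\backslash B}=\prod_{\alpha\in A\backslash B}\mathfrak{C}$ is a product of compact metric spaces, the absolute extensor theorem proved above applies to $\mathcal{H}(\mathfrak{C}^{A\backslash B})$, so $\phi$ extends to a continuous map $\widetilde\phi\colon\mathfrak{C}^B\to\mathcal{H}(\mathfrak{C}^{A\backslash B})$.

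With $\widetilde\phi$ and the given extension $\widetilde f_B\in\mathcal{H}(\mathfrak{C}^B)$ in hand, I would set
\[ \widetilde f(x,t)=\bigl(\widetilde f_B(x),\,\widetilde\phi(x)(t)\bigr),\qquad (x,t)\in\mathfrak{C}^B\times\mathfrak{C}^{A\backslash B}=\mathfrak{C}^A. \]
Continuity of $\widetilde f$ follows from continuity of $\widetilde f_B$ and of the evaluation map $\mathcal{H}(\mathfrak{C}^{A\backslash B})\times\mathfrak{C}^{A\backslash B}\to\mathfrak{C}^{A\backslash B}$. Since each $\widetilde\phi(x)$ is a homeomorphism, $\widetilde f$ is a bijection with inverse $\widetilde f^{-1}(y,s)=\bigl(\widetilde f_B^{-1}(y),\,\widetilde\phi(\widetilde f_B^{-1}(y))^{-1}(s)\bigr)$, and the continuity of this inverse uses that $\mathcal{H}(\mathfrak{C}^{A\backslash B})$ is a topological group, so that inversion is continuous, together with continuity of evaluation. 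Thus $\widetilde f\in\mathcal{H}(\mathfrak{C}^A)$. Finally, for $(x,t)\in P$ we have $x\in P_B$, whence $\widetilde\phi(x)=\phi(x)=g_x$ and $\widetilde f_B(x)=f_B(x)$, so $\widetilde f(x,t)=(f_B(x),g(x,t))=f(x,t)$; that is, $\widetilde f$ extends $f$.

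The genuinely substantial ingredient is the absolute extensor theorem, which alone produces $\widetilde\phi$; once the task is recast as extending the $\mathcal{H}(\mathfrak{C}^{A\backslash B})$-valued map $\phi$ over the zero-dimensional compact pair $(P_B,\mathfrak{C}^B)$, nothing deeper is needed. I expect the only point requiring care to be the continuity bookkeeping for $\widetilde f$ and for its inverse, which rests squarely on the topological-group structure of $\mathcal{H}(\mathfrak{C}^{A\backslash B})$ and on the continuity of the evaluation map, both guaranteed by compactness of $\mathfrak{C}^{A\backslash B}$.
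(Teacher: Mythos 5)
Your proposal is correct and follows essentially the same route as the paper: both write $f(x,t)=(f_B(x),g(x,t))$, use the exponential law to obtain a continuous map $P_B\to\mathcal H(\mathfrak{C}^{A\backslash B})$, extend it over $\mathfrak{C}^B$ via Theorem 3.4 (the absolute extensor property of $\mathcal H$ of a product of compact metric spaces for zero-dimensional compacta), and assemble $\widetilde f(x,t)=(\widetilde f_B(x),\widetilde\phi(x)(t))$. Your extra care with the continuity of the inverse via the topological-group structure of $\mathcal H(\mathfrak{C}^{A\backslash B})$ is a detail the paper leaves implicit, but the argument is the same.
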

\begin{proof}
Since $f_B\circ\pi_B=\pi_B\circ f$, $f$ is of the form $f(x,y)=(f_B(x),h(x,y))$ with $(x,y)\in P_{B}\times\mathfrak{C}^{A\backslash B}$ such that for each $x\in P_{B}$ the map $\varphi_x$, defined by $\varphi_x(y)=h(x,y)$, belongs to
$\mathcal H(\mathfrak{C}^{A\backslash B})$. So, we have a map $\varphi:P_{B}\to\mathcal H(\mathfrak{C}^{A\backslash B})$, see \cite[Theorem 3.4.9]{en}. By Theorem 3.4, we can extend $\varphi$ to a map $\widetilde\varphi: \mathfrak{C}^B\to\mathcal H(\mathfrak{C}^{A\backslash B})$ and define $\widetilde h:\mathfrak{C}^A\to \mathfrak{C}^{A\backslash B}$,
$\widetilde h(x,y)=\widetilde\varphi(x)(y)$, where $(x,y)\in \mathfrak{C}^B\times \mathfrak{C}^{A\backslash B}$. Finally, $\widetilde f(x,y)=(\widetilde f_B,\widetilde h(x,y))$ provides a homeomorphism in $\mathcal H(\mathfrak{C}^A)$ extending $f$ with $\widetilde f_B\circ\pi_B=\pi_B\circ\widetilde f$.
\end{proof}

\section{Proof of Theorem 1.1}

The next lemma with a little bit different formulation was proved in \cite[Lemma 3.1]{sv}.
\begin{lem}
Let $X, Y$ be 0-dimensional paracompact spaces, $\mathfrak{C}$ be the Cantor set and $P'\subset X\times\mathfrak{C}$, $K'\subset Y\times\mathfrak{C}$ be closed sets such that $\pi_X(P')=X$ and $\pi_{Y}(K')=Y$. Suppose $f:P'\to K'$ and $g:X\to Y$ are homeomorphisms with $g\circ\pi_X=\pi_{Y}\circ f$, and there are proper closed sets $F_X\subset X$ and  $F_Y\subset Y$ such that:
\begin{itemize}
\item[(i)] $g(F_X)=F_Y$;
\item[(ii)] $F_X\times\mathfrak{C}\subset P'$ and $F_Y\times\mathfrak{C}\subset K'$;
\item[(iii)] $\pi_{\mathfrak{C}}((\{x\}\times\mathfrak{C})\cap P')$ and $\pi_{\mathfrak{C}}((\{y\}\times\mathfrak{C})\cap K')$
are nowhere dense in $\mathfrak{C}$ for all $x\in X\backslash F_X$ and $y\in Y\backslash F_Y$.
\end{itemize}
Then $f$ can be extended to a homeomorphism
$\widetilde f:X\times\mathfrak{C}\to Y\times\mathfrak{C}$ such that $g\circ\pi_X=\pi_{Y}\circ\widetilde f$.
\end{lem}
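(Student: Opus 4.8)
The plan is to realize $\widetilde f$ as a fibered homeomorphism and to build it by a continuous selection over the base $X$. The required relation $g\circ\pi_X=\pi_{Y}\circ\widetilde f$ forces $\widetilde f$ to have the form $\widetilde f(x,t)=(g(x),h_x(t))$ with $h_x\in\mathcal{H}(\mathfrak{C})$, so the whole problem reduces to finding a \emph{continuous} assignment $x\mapsto h_x\in\mathcal{H}(\mathfrak{C})$ such that $h_x$ extends the partial homeomorphism prescribed by $f$ on the fiber over $x$. Concretely, write $Q_x=\{t\in\mathfrak{C}:(x,t)\in P'\}$ and $R_y=\{t\in\mathfrak{C}:(y,t)\in K'\}$, and let $\bar f_x\colon Q_x\to R_{g(x)}$ be the homeomorphism induced by $f$ (it is fiber-to-fiber because $\pi_{Y}\circ f=g\circ\pi_X$). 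I must find $h_x$ with $h_x|_{Q_x}=\bar f_x$ for every $x$ and with $(x,t)\mapsto h_x(t)$ continuous. Once this is done, $\widetilde f(x,t)=(g(x),h_x(t))$ is a continuous bijection whose inverse $(y,s)\mapsto(g^{-1}(y),h_{g^{-1}(y)}^{-1}(s))$ is also continuous, since inversion is continuous in $\mathcal{H}(\mathfrak{C})$ and $g^{-1}$ is a homeomorphism; thus $\widetilde f$ is the desired homeomorphism, and it agrees with $f$ on $P'$ by construction.

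To organize the selection, set $\Phi(x)=\{h\in\mathcal{H}(\mathfrak{C}):h|_{Q_x}=\bar f_x\}$. Here $\mathcal{H}(\mathfrak{C})$ is a Polish (completely metrizable, separable) group, and each $\Phi(x)$ is a closed, hence completely metrizable, subset. By hypothesis $(iii)$, for $x\in X\setminus F_X$ the sets $Q_x$ and $R_{g(x)}$ are closed and nowhere dense in $\mathfrak{C}$, so Theorem 2.5 (Knaster--Reichbach, with both ambient spaces equal to $\mathfrak{C}$) guarantees that $\bar f_x$ extends to a homeomorphism of $\mathfrak{C}$; thus $\Phi(x)\neq\varnothing$. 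For $x\in F_X$, condition $(ii)$ gives $Q_x=R_{g(x)}=\mathfrak{C}$, so $\Phi(x)=\{\bar f_x\}$ is a single point, and $x\mapsto\bar f_x$ is a continuous selection of $\Phi$ over the closed set $F_X$. The remaining task is to extend this boundary selection to a continuous selection of $\Phi$ over all of $X$.

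The engine for this is Michael's zero-dimensional selection theorem: a lower semicontinuous carrier with completely metrizable values over a paracompact, strongly zero-dimensional domain admits a continuous selection, and any continuous selection prescribed on a closed subset extends. The domain $X$ is paracompact and zero-dimensional (of covering dimension zero in the intended applications), the values $\Phi(x)$ are complete, and the boundary data on $F_X$ is a continuous selection; so everything is in place \emph{provided} $\Phi$ is lower semicontinuous, and verifying this is, I expect, the main obstacle. Away from $F_X$ lower semicontinuity is a stability statement for the Knaster--Reichbach extension: if $x$ is close to $x_0$ then $Q_x$ is Hausdorff-close to $Q_{x_0}$ and $\bar f_x$ is uniformly close to $\bar f_{x_0}$ (continuity of $f$ and closedness of $P'$), and one must produce $h_x\in\Phi(x)$ inside a prescribed clopen neighbourhood of a given $h_0\in\Phi(x_0)$, where nowhere-density of $Q_x$ supplies the room to match inside each piece of a fine clopen partition on which $h_0$ is modelled. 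The delicate direction is approaching $x_0\in F_X$: there $Q_{x_0}=\mathfrak{C}$ and $\Phi(x_0)=\{\bar f_{x_0}\}$ is rigid, while for nearby $x\notin F_X$ the set $Q_x$ is only nowhere dense; closedness of $P'$ forces $Q_x$ to become arbitrarily dense (arbitrarily Hausdorff-close to $\mathfrak{C}$) as $x\to x_0$, so the prescribed values $\bar f_x$ pin down $h_x$ up to any preassigned clopen level and every admissible extension is automatically close to $\bar f_{x_0}$. Making this parameter-uniform matching precise — effectively a controlled form of Theorem 2.5 — is the crux; once it yields lower semicontinuity, Michael's theorem delivers the continuous selection $x\mapsto h_x$ and hence the homeomorphism $\widetilde f$.
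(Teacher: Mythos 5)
Your framework is the same as the paper's: reduce to a fiberwise map $\widetilde f(x,t)=(g(x),h_x(t))$, form the carrier $\Phi(x)=\{h\in\mathcal H(\mathfrak C):h|_{Q_x}=\bar f_x\}$ (singleton on $F_X$), get $\Phi(x)\neq\varnothing$ from the Knaster--Reichbach theorem (which is Theorem 2.6, not 2.5), and apply Michael's zero-dimensional selection theorem. But what you have written is a reduction, not a proof: lower semicontinuity of $\Phi$ -- which you yourself call ``the crux'' -- is precisely the step the paper's proof is devoted to, and you leave it unestablished. Moreover, the route you sketch toward it rests on two false claims. First, closedness of $P'$ does \emph{not} make $x\mapsto Q_x$ Hausdorff-continuous, and in particular does not force $Q_x$ to become dense as $x\to x_0\in F_X$; closedness gives only upper semicontinuity of fibers. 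For instance, with $X=\{0\}\cup\{1/n:n\geq 1\}$, $F_X=\{0\}$ and $P'=(\{0\}\times\mathfrak C)\cup(X\times\{c_0\})$, every fiber off $F_X$ is a single point, no matter how close $x$ is to $0$. Second, even when $Q_x$ is $\varepsilon$-dense it is not true that every $h\in\Phi(x)$ is automatically close to $\bar f_{x_0}$: starting from one extension $h_0$, one may compose with a finite permutation of clopen pieces of $\mathfrak C\setminus \bar f_x(Q_x)$, swapping two far-apart pieces; the result is still a homeomorphism extending $\bar f_x$ but lies outside any prescribed small neighborhood of $\bar f_{x_0}$. (Lower semicontinuity only needs \emph{some} element of $\Phi(x)$ in the given neighborhood, so this false claim is also unnecessary -- but producing that element is exactly the missing argument.)

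The mechanism that actually works, and is what the paper does, uses the product structure rather than any continuity of the fiber map. Given $x^*$ and $h^*\in\Phi(x^*)\cap W$, one may take $W=\{h\in\mathcal H(\mathfrak C):h(U_i)\subset V_i,\ i=1,\dots,k\}$ with $\{U_i\}$, $\{V_i\}$ disjoint clopen covers of $\mathfrak C$ and $V_i=h^*(U_i)$. Fatten these to clopen rectangles $\widetilde U_i=O(x^*)\times U_i$ and $\widetilde V_i=g(O(x^*))\times V_i$, where the clopen neighborhood $O(x^*)$ is chosen so small (using compactness of the fiber and continuity of $f$) that $f(\widetilde U_i\cap P')\subset \widetilde V_i\cap K'$ for every $i$. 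Then for each $x\in O(x^*)$ the map $f$ restricts to a homeomorphism between $\widetilde U_i(x)\cap P'$ and $\widetilde V_i(x)\cap K'$, which by hypothesis (iii) are closed nowhere dense subsets of the perfect zero-dimensional compacta $U_i$ and $V_i$; applying Knaster--Reichbach on each piece and gluing the $k$ resulting homeomorphisms yields $h_x\in\Phi(x)\cap W$. The same argument handles $x^*\in F_X$, where $\Phi(x^*)=\{h^*\}$, without any appeal to density of nearby fibers. Until you supply this (or an equivalent) verification of lower semicontinuity, the proposal has a genuine gap at its central step.
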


Suppose $g:X\to Y$ and $f:X\times Z\to Y\times Z$ are two homeomorphisms and let $\pi_X:X\times Z\to X$, $\pi_Y:Y\times Z\to Y$ be the corresponding projections. We say that $f$ is a {\em fiberwise homeomorphism with respect to $g$} if  $\pi_Y\circ f=g\circ\pi_X$.

Everywhere below we suppose that $X$ is an uncountable power of the Cantor set, $P\subset X$ is closed and $f\in\mathcal H(P)$
with $f(P^{(\lambda)})=P^{(\lambda)}$ for every cardinal $\lambda$.
We follow the notations and the definitions from Section 2.
\begin{lem}
Let $X=\mathfrak{C}^A$ and $\Gamma\subset\Lambda$ be $f$-admissible subsets of $A$  with $\Lambda\backslash\Gamma$  countable. Suppose there exists a closed set $F\subset P_\Gamma$ such that 
$F\times X_{\Lambda\backslash\Gamma}\subset P_\Lambda$
and $f_\Lambda(F\times X_{\Lambda\backslash\Gamma})=f_\Gamma(F)\times X_{\Lambda\backslash\Gamma}$.
Then there exists a closed $G_\delta$-set $L$ in $P_\Gamma$ containing $F$ and a fiberwise homeomorphism
$f_\Lambda':L\times X_{\Lambda\backslash\Gamma}\to f_\Gamma(L)\times X_{\Lambda\backslash\Gamma}$ with respect to $f_\Gamma|L$
extending the homeomorphism
$f_\Lambda|((L\times X_{\Lambda\backslash\Gamma})\cap P_\Lambda)$.
\end{lem}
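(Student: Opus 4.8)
The statement is set up to be reduced to Lemma 4.1. Since $\Lambda\backslash\Gamma$ is countable, $X_{\Lambda\backslash\Gamma}$ is a metrizable, compact, perfect, zero-dimensional space, i.e. a copy of the Cantor set $\mathfrak C$. The plan is to first produce the set $L$ and then apply Lemma 4.1 with $X:=L$, $Y:=f_\Gamma(L)$, Cantor factor $X_{\Lambda\backslash\Gamma}$, $g:=f_\Gamma|_L$, $P':=(L\times X_{\Lambda\backslash\Gamma})\cap P_\Lambda$, $K':=(f_\Gamma(L)\times X_{\Lambda\backslash\Gamma})\cap P_\Lambda$, $f:=f_\Lambda|_{P'}$, and $F_X:=F$, $F_Y:=f_\Gamma(F)$. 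Because $\Gamma$ and $\Lambda$ are $f$-admissible, $f_\Lambda$ is a fiberwise homeomorphism over $f_\Gamma$, i.e. $\pi^\Lambda_\Gamma\circ f_\Lambda=f_\Gamma\circ\pi^\Lambda_\Gamma$; this yields $f(P')=K'$ and $g\circ\pi_X=\pi_Y\circ f$. The projections $\pi_X,\pi_Y$ are surjective since $P_\Lambda$ projects onto $P_\Gamma\supset L$, while $L$ and $f_\Gamma(L)$ are closed in $\mathfrak C^\Gamma$, hence compact and zero-dimensional. Conditions (i) and (ii) of Lemma 4.1 are immediate from $g(F)=f_\Gamma(F)$ and from the hypotheses $F\times X_{\Lambda\backslash\Gamma}\subset P_\Lambda$ and $f_\Gamma(F)\times X_{\Lambda\backslash\Gamma}\subset P_\Lambda$ (the latter being the given equality read on the image side). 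Thus the conclusion follows once $L$ is chosen so that condition (iii) holds, the extension $\widetilde f$ delivered by Lemma 4.1 being exactly the required $f_\Lambda'$.

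Everything therefore reduces to finding a closed $G_\delta$-set $L$ with $F\subset L\subset P_\Gamma$ for which every $x\in L\backslash F$ has nowhere dense fiber, on both the domain and the image side. (When $L$ may be taken equal to $F$—for instance if $F$ is already a closed $G_\delta$, or $F=P_\Gamma$—the map $f_\Lambda$ is already defined on the full cylinder $(F\times X_{\Lambda\backslash\Gamma})\cap P_\Lambda=F\times X_{\Lambda\backslash\Gamma}$ and serves as $f_\Lambda'$; so I may assume $F\subsetneq L$, which also gives the properness needed in Lemma 4.1.) To analyse (iii), let $\{V_n\}$ enumerate the countably many nonempty clopen subsets of $X_{\Lambda\backslash\Gamma}$ and put $G_n:=\{x\in P_\Gamma:\{x\}\times V_n\subset P_\Lambda\}$. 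Each $G_n$ is closed, and a point $x$ has nowhere dense fiber precisely when $x\notin N$, where $N:=\bigcup_n G_n$ is the $F_\sigma$ ``somewhere-dense locus''. The hypotheses give $F\subset\bigcap_n G_n$ and $F\subset f_\Gamma^{-1}\big(\bigcap_n G_n\big)$, so condition (iii) for both sides is equivalent to $L\cap\big(N\cup f_\Gamma^{-1}(N)\big)=F$; that is, the new points of $L$ must avoid the $F_\sigma$-set $\big(N\cup f_\Gamma^{-1}(N)\big)\backslash F$.

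The main step is thus to separate the closed set $F$ from the $F_\sigma$-set $\big(N\cup f_\Gamma^{-1}(N)\big)\backslash F$ by a closed $G_\delta$-set. Here I would use that a closed $G_\delta$-subset of $\mathfrak C^\Gamma$ is determined by countably many coordinates, so that $L$ must have the form $\pi_D^{-1}(Z)\cap P_\Gamma$ for a countable $D\subset\Gamma$ and a closed $Z\subset\mathfrak C^D$ with $\pi_D(F)\subset Z$; the task is to choose $D$ large enough (and then $Z:=\pi_D(F)$) that $\pi_D^{-1}(\pi_D(F))\cap\big(N\cup f_\Gamma^{-1}(N)\big)=F$. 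This is where the negligibility machinery enters: Proposition 2.3 and Lemma 2.4 are precisely the tools guaranteeing that, after passing to suitable countably many coordinates, the excess somewhere-dense points beyond $F$ are cut off while the full cylinders over $F$ and over $f_\Gamma(F)$ are preserved.

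I expect the hard part to be exactly this construction of $L$, for two reasons. First, $P_\Gamma$ is in general non-metrizable, so the separation of $F$ from the bad $F_\sigma$-set cannot be done by a crude metric argument and must exploit the countable-coordinate description of closed $G_\delta$-sets. Second, the property ``nowhere dense fiber'' is not visibly invariant under $f_\Gamma$, so the domain locus $N$ and the image locus $f_\Gamma^{-1}(N)$ have to be controlled simultaneously; it is precisely the symmetric output of Lemma 2.4—nowhere density over $F$ and over $f_\Gamma(F)$ at once—that makes this feasible. Once $L$ is in hand, verifying the hypotheses of Lemma 4.1 is routine, and the fiberwise homeomorphism $f_\Lambda'$ extending $f_\Lambda|_{(L\times X_{\Lambda\backslash\Gamma})\cap P_\Lambda}$ is obtained.
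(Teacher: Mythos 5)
Your reduction to Lemma 4.1 is not the paper's route, and it contains a genuine gap: the set $L$ you need does not exist in general. To invoke Lemma 4.1 you must produce a closed $G_\delta$-set $L\supset F$ in $P_\Gamma$ such that \emph{every} point of $L\backslash F$ has a nowhere dense fiber (on both the domain and image sides). But the hypotheses of the present lemma say nothing about the fiber structure of $P_\Lambda$ over $P_\Gamma\backslash F$, and Lemma 4.1 forces a dichotomy --- full cylinder over $F_X$, nowhere dense fiber off $F_X$ --- that simply may be unavailable. Concretely, take $P_\Lambda=P_\Gamma\times X_{\Lambda\backslash\Gamma}$ (a legitimate instance: the cylinder and image conditions on $F$ hold automatically) with $\Gamma$ uncountable and $F$ a closed subset of $P_\Gamma$ that is not a $G_\delta$. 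Then your ``somewhere-dense locus'' $N$ is all of $P_\Gamma$, so your requirement $L\cap\bigl(N\cup f_\Gamma^{-1}(N)\bigr)=F$ forces $L=F$, which is not a closed $G_\delta$; meanwhile the lemma itself holds trivially with $L=P_\Gamma$. This shows the nowhere-dense-fiber condition is the wrong invariant to aim for. A second problem is that the machinery you invoke (Proposition 2.3 and the lemma following it) cannot deliver your separation step: those results operate under hypotheses not present here ($\mu$-negligibility of $P_B\backslash P_B^{(\lambda)}$, $\sigma$-compact sets disjoint from $P_C^{(\lambda)}$), and they achieve nowhere dense fibers by \emph{enlarging the fiber coordinates} --- choosing a new countable set of indices outside the base --- whereas in this lemma $\Gamma$ and $\Lambda$ are fixed in advance. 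It is the subsequent Lemma 4.3 that interleaves the present lemma with Lemma 2.5 (re-choosing $\Lambda(n)$ at each stage) and only at the end applies Lemma 4.1; you have collapsed that two-lemma architecture into this single statement, where the needed hypotheses are absent.

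The paper's actual proof avoids fiber geometry entirely. Writing $f_\Lambda(x,y)=(f_\Gamma(x),\phi(x,y))$, the hypothesis on $F$ gives a continuous map $\varphi:F\to\mathcal H(X_{\Lambda\backslash\Gamma})$, $\varphi(x)(y)=\phi(x,y)$. By Theorem 3.4, $\mathcal H(X_{\Lambda\backslash\Gamma})$ is an absolute extensor for $0$-dimensional compacta, so $\varphi$ extends to $\widetilde\varphi:P_\Gamma\to\mathcal H(X_{\Lambda\backslash\Gamma})$. The extension need not be compatible with $f_\Lambda$ at points of $P_\Lambda$ lying over $P_\Gamma\backslash F$, and this is exactly what the $G_\delta$-set repairs: fixing a countable family $\Omega$ of finite disjoint clopen covers $\omega$ of $X_{\Lambda\backslash\Gamma}$ coming from a countable clopen base, one chooses for each $\omega$ a clopen neighborhood $W_\omega$ of $F$ on which $\widetilde\varphi_z$ and $f_\Lambda$ move the pieces of $\omega$ into the same pieces $\varphi_x(U_i)$, and sets $L=\bigcap_{\omega\in\Omega}W_\omega$. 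Compatibility with respect to every $\omega$ forces exact agreement, so $f_\Lambda'(z,y)=(f_\Gamma(z),\widetilde\varphi_z(y))$ is the desired fiberwise extension on $L\times X_{\Lambda\backslash\Gamma}$. In short: the key tool is the extension theorem for homeomorphism groups (Theorem 3.4), not the Knaster--Reichbach-type Lemma 4.1, and the $G_\delta$-set is built to enforce agreement with $f_\Lambda$, not to enforce nowhere density of fibers.
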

\begin{proof}
The case $\Gamma=\Lambda$ is trivial, so let $\Lambda\backslash\Gamma\neq\varnothing$.
Since  $\pi^\Lambda_\Gamma\circ f_\Lambda=f_\Gamma\circ\pi^\Lambda_\Gamma$,
there is a map $\phi:F\times X_{\Lambda\backslash\Gamma}\to X_{\Lambda\backslash\Gamma}$ such that $f_\Lambda(x,y)=(f_\Gamma(x),\phi(x,y))$ for all $(x,y)\in F\times X_{\Lambda\backslash\Gamma}$
and the equality  $\varphi_x(y)=\phi(x,y)$ defines a homeomorphism of $X_{\Lambda\backslash\Gamma}$ for every $x\in F$. Therefore, we have a continuous map $\varphi:F\to\mathcal H(X_{\Lambda\backslash\Gamma})$, $\varphi(x)=\varphi_x$. Then by Theorem 3.4, $\varphi$ can be extended to a map
$\widetilde\varphi:P_\Gamma\to\mathcal H(X_{\Lambda\backslash\Gamma})$. Fix a countable, finitely additive base $\mathcal B$ of $X_{\Lambda\backslash\Gamma}$ consisting of clopen sets and let
$\Omega$ be the family of all finite, disjoint clopen covers of $X_{\Lambda\backslash\Gamma}$ whose elements belong to $\mathcal B$.
Because $\mathcal B$ is countable and finitely additive, $\Omega$ is countable and contains every disjoint clopen cover of $X_{\Lambda\backslash\Gamma}$.
Moreover, for every $\omega=\{U_1,U_2,..,U_k\}\in\Omega$ and $x\in F$
the family
$\{\{x\}\times U_1,\{x\}\times U_2,..,\{x\}\times U_k\}$ is a clopen disjoint cover of $\{x\}\times X_{\Lambda\backslash\Gamma}$
and the set
$$O_\omega(\varphi_x)=\{h\in\mathcal H(X_{\Lambda\backslash\Gamma}):h(U_i)\subset V_i, i=1,..,k\},$$ is a neighborhood of $\varphi_x$ in $\mathcal H(X_{\Lambda\backslash\Gamma})$, where $V_i=\varphi_x(U_i)$.
We have $f_\Lambda(\{x\}\times U_i)=f_\Gamma(x)\times V_i$, $i=1,2,..,k$. Therefore, for every $\omega\in\Omega$ and $x\in F$ there exists a neighborhood $O_\omega(x)$ of $x$
in $P_\Gamma$ such that $f_\Lambda((\{z\}\times U)\cap P_\Lambda)\subset \{f_\Gamma(z)\}\times\varphi_x(U)$ for all $U\in\omega$ and $z\in O_\omega(x)$.
\begin{claim}
If $h\in O_\omega(\varphi_x)$, then $h(U)=\varphi_x(U)$ for every $U\in\omega$.
\end{claim}
Obviously, $h(U)\subset\varphi_x(U)$ for every $U\in\omega$. So, the claim follows from the fact that $\omega$ is a disjoint cover of $X_{\Lambda\backslash\Gamma}$ and $h, \varphi_x$ are homeomorphisms of $X_{\Lambda\backslash\Gamma}$.

Then $G_\omega(x)=\widetilde\varphi^{-1}(O_\omega(\varphi_x))\cap O_\omega(x)$ is a neighborhood of $x$ in $P_\Gamma$. Let $W_\omega$ be a clopen neighborhood of $F$ in $P_\Gamma$ with
$W_\omega\subset\bigcup_{x\in F}G_\omega(x)$. Finally, let $L=\bigcap_{\omega\in\Omega}W_\omega$. Then for every $z\in L$
the map
$\widetilde\varphi_z=\widetilde\varphi(z)$ is a homeomorphism of  $X_{\Lambda\backslash\Gamma}$ such that
$\widetilde\varphi_z(U)\subset\varphi_{x}(U)$ for each $U\in\omega$ provided $z\in \widetilde\varphi^{-1}(O_\omega(\varphi_x))$. Now we define the homeomorphism
$f_\Lambda':L\times X_{\Lambda\backslash\Gamma}\to f_\Gamma(L)\times X_{\Lambda\backslash\Gamma}$, $f_\Lambda'(z,y)=(f_\Gamma(z),\widetilde\varphi_z(y))$. Let show that $f_\Lambda'$ extends
$f_\Lambda|((L\times X_{\Lambda\backslash\Gamma})\cap P_\Lambda)$.
If $(z,y)\in (L\times X_{\Lambda\backslash\Gamma})\cap P_\Lambda$, then for every $\omega\in\Omega$ there is $x_\omega\in F$ and $U_\omega\in\omega$ such that $z\in G_\omega(x_\omega)$ and $y\in U_\omega$.
Consequently, $\widetilde\varphi_z\in O_\omega(\varphi_{x_\omega})$ which, by Claim 4.3, implies $\widetilde\varphi_{z}(U_\omega)=\varphi_{x_\omega}(U_\omega)$.  
Hence, $f_\Lambda'(z,y)\in\{f_\Gamma(z)\}\times\widetilde\varphi_{z}(U_\omega)=\{f_\Gamma(z)\}\times\varphi_{x_\omega}(U_\omega)$.
On the other hand, $z\in O_\omega(x_\omega)$ yields
$f_\Lambda(z,y)\in\{f_\Gamma(z)\}\times\varphi_{x_\omega}(U_\omega)$.
Therefore,
$$f_\Lambda'(z,y), f_\Lambda(z,y)\in\{f_\Gamma(z)\}\times\widetilde\varphi_{z}(U_\omega)$$
for every $\omega\in\Omega$, where $U_\omega$ is the only element from $\omega$ containing $y$. This means that
$f_\Lambda'(z,y)=f_\Lambda(z,y)$. Indeed, otherwise there would be $\omega'\in\Omega$ and two different elements $U_i',U_j'\in\omega'$ with $\widetilde\varphi_z(y)\in U_i'$ and $\pi_{\Lambda\backslash\Gamma}(f_\Lambda(z,y))\in U_j'$. Then $\omega=\widetilde\varphi_z^{-1}(\omega')\in\Omega$ and
$U_\omega=\widetilde\varphi_z^{-1}(U_i')$, but $f_\Lambda(z,y)\not\in\{f_\Gamma(z)\}\times\widetilde\varphi_z(U_\omega)$
while $f_\Lambda'(z,y)\in\{f_\Gamma(z)\}\times\widetilde\varphi_z(U_\omega)$.
\end{proof}

\begin{lem}
Let the saturated sets $C\subset B$  satisfy the hypotheses of Proposition $2.3$ with $X=\mathfrak{C}^A$. Then for every saturated set $\Gamma\subset B$  of cardinality $\leq\lambda$ containing $C$ there exists a saturated set $\Lambda(\Gamma)\subset B$ containing $\Gamma$ such that $\Lambda(\Gamma)\backslash\Gamma\neq\varnothing$ is countable and the homeomorphism $f_{\Lambda(\Gamma)}$ can be extended to a fiberwise homeomorphism
$\widetilde f_{\Lambda(\Gamma)}:P_\Gamma\times X_{\Lambda(\Gamma)\backslash\Gamma}\to P_\Gamma\times X_{\Lambda(\Gamma)\backslash\Gamma}$ with respect to $f_\Gamma$.
\end{lem}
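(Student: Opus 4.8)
The plan is to obtain $\widetilde f_\Lambda$ from Michael's zero-dimensional selection theorem in the form of Lemma~4.1, after using Lemma~4.2 to dispose of the part of $P_\Lambda$ lying over the full-fibre locus. Set $F:=P_\Gamma^{(\lambda)}$. Since $f$ preserves $P^{(\lambda)}$ and the admissible sets considered here satisfy $\pi_\Gamma^{-1}(P_\Gamma^{(\lambda)})=P^{(\lambda)}$ (as in Proposition~2.3), $F$ is a closed proper subset of $P_\Gamma$ with $f_\Gamma(F)=F$, and $\pi_\Gamma^{-1}(F)=P^{(\lambda)}\subset P$ forces $\{x\}\times X_{\Lambda\backslash\Gamma}\subset P_\Lambda$ for every $x\in F$ and every $\Lambda\subset B$. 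Thus over $F$ the fibres of $P_\Lambda$ are full; the whole problem is to choose $\Lambda$ so that over $P_\Gamma\backslash F$ they become nowhere dense, for then Lemma~4.1 with base $P_\Gamma$, fibre $X_{\Lambda\backslash\Gamma}\cong\mathfrak C$ and $g=f_\Gamma$ will produce the desired fibrewise extension.

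The hard part is exactly this thinning of the fibres, and the obstruction is that $P_\Gamma\backslash F$ need not be $\sigma$-compact when $|\Gamma|>\aleph_0$, so a single countable enlargement of $\Gamma$ cannot be expected to control every fibre over it. I would first apply Lemma~4.2 to $F$: it yields a \emph{closed} $G_\delta$-set $L\supset F$ in $P_\Gamma$ and a fibrewise homeomorphism $f_\Lambda':L\times X_{\Lambda\backslash\Gamma}\to f_\Gamma(L)\times X_{\Lambda\backslash\Gamma}$ over $f_\Gamma$ extending $f_\Lambda$ on $(L\times X_{\Lambda\backslash\Gamma})\cap P_\Lambda$ (the hypothesis $f_\Lambda(F\times X_{\Lambda\backslash\Gamma})=f_\Gamma(F)\times X_{\Lambda\backslash\Gamma}$ of Lemma~4.2 holds because $F$ has full fibres and $f_\Gamma(F)=F$). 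As $L$ is a countable intersection of clopen neighbourhoods of $F$, the set $M:=P_\Gamma\backslash L$ and its image $f_\Gamma(M)=P_\Gamma\backslash f_\Gamma(L)$ are $\sigma$-compact and lie in $P_\Gamma\backslash F$. Now I would run the argument of Lemma~2.5 with the admissible set $\Gamma$ in the role of $C$: the relations $\pi_\Gamma^{-1}(P_\Gamma^{(\lambda)})=P^{(\lambda)}=\pi_B^{-1}(P_B^{(\lambda)})$ give $(\pi^B_\Gamma)^{-1}(P_\Gamma^{(\lambda)})=P_B^{(\lambda)}$, so $(\pi^B_\Gamma)^{-1}(M\cup f_\Gamma(M))\cap P_B$ is a $\sigma$-compact subset of $P_B\backslash P_B^{(\lambda)}$, each compact piece $\mu$-negligible by Proposition~2.3(ii); applying Lemma~2.2 countably many times exactly as in the proof of Lemma~2.5 produces a countable $f$-admissible $\Lambda\subset B$ containing $\Gamma$ for which $\pi^\Lambda_{\Lambda\backslash\Gamma}((\pi^\Lambda_\Gamma)^{-1}(x)\cap P_\Lambda)$ is nowhere dense in $X_{\Lambda\backslash\Gamma}$ for all $x\in M\cup f_\Gamma(M)$. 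Because nowhere density of a fibre persists under enlarging $\Lambda$ (if $\Lambda\subset\Lambda'$ a fibre in $X_{\Lambda'\backslash\Gamma}$ sits inside the preimage $S\times X_{\Lambda'\backslash\Lambda}$ of the corresponding nowhere dense fibre $S\subset X_{\Lambda\backslash\Gamma}$, which is again nowhere dense), the countable enlargement feeding Lemma~4.2 and the one feeding Lemma~2.5 can be interlaced and taken together in a single $\Lambda$.

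With $\Lambda$ fixed I would then glue: put $\widehat P:=P_\Lambda\cup(L\times X_{\Lambda\backslash\Gamma})$ and $\widehat K:=P_\Lambda\cup(f_\Gamma(L)\times X_{\Lambda\backslash\Gamma})$, both closed, and note that $f_\Lambda$ and $f_\Lambda'$ agree on the overlap, so they paste to a homeomorphism $\widehat f:\widehat P\to\widehat K$ which is fibrewise over $f_\Gamma$ and extends $f_\Lambda$. Finally I would apply Lemma~4.1 with $X=Y=P_\Gamma$, $P'=\widehat P$, $K'=\widehat K$, $g=f_\Gamma$, $F_X=L$, $F_Y=f_\Gamma(L)$: items (i),(ii) hold by construction, and item (iii) holds because on $P_\Gamma\backslash L=M$ and $P_\Gamma\backslash f_\Gamma(L)=f_\Gamma(M)$ the sets $\widehat P,\widehat K$ equal $P_\Lambda$, whose fibres were just made nowhere dense. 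The extension delivered by Lemma~4.1 is a fibrewise homeomorphism $\widetilde f_\Lambda:P_\Gamma\times X_{\Lambda\backslash\Gamma}\to P_\Gamma\times X_{\Lambda\backslash\Gamma}$ over $f_\Gamma$ extending $\widehat f\supset f_\Lambda$, as required.

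The step I expect to be the main obstacle is the construction of $\Lambda$: passing from the $\mu$-negligibility of Proposition~2.3(ii) in $X_B$ to genuine nowhere density of the $X_{\Lambda\backslash\Gamma}$-fibres over the base $P_\Gamma$ (working over $\Gamma$ rather than over $C$, so that no slicing of fibres is involved), and coordinating this with Lemma~4.2 so that $P_\Gamma\backslash L$ stays inside the $\sigma$-compact set whose fibres have already been thinned. The remaining points—closedness of $\widehat P,\widehat K$ and the verification that $f_\Lambda$ and $f_\Lambda'$ paste to a homeomorphism—are routine in the $0$-dimensional compact setting.
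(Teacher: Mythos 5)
Your overall strategy coincides with the paper's: apply Lemma~4.2 over the full-fibre locus $F=P_\Gamma^{(\lambda)}$, use the argument of Lemma~2.5 (with $\Gamma$ in the role of $C$) to make the fibres over the $\sigma$-compact complement nowhere dense, glue the two partial homeomorphisms, and finish with Lemma~4.1; your verification of Lemma~4.2's hypotheses for $F$ and your final gluing paragraph are exactly how the paper concludes. But there is a genuine gap at the point you yourself flag as the main obstacle, and your one-sentence resolution --- that the enlargement feeding Lemma~4.2 and the one feeding Lemma~2.5 ``can be interlaced and taken together in a single $\Lambda$'' --- fails as stated, because the two requirements react to enlargement of $\Lambda$ in opposite ways. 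Nowhere density of fibres does persist when $\Lambda$ grows, as you correctly note; the fibrewise extension produced by Lemma~4.2 does \emph{not}. If Lemma~4.2 is applied with some $\Lambda_1$, it yields $L_1$ and a homeomorphism only on $L_1\times X_{\Lambda_1\backslash\Gamma}$; after you enlarge to $\Lambda_2$ to thin the fibres over $M_1=P_\Gamma\backslash L_1$, you have no fibrewise homeomorphism over $L_1$ at level $\Lambda_2$ extending $f_{\Lambda_2}$, since the new coordinates $\Lambda_2\backslash\Lambda_1$ are moved by $f_{\Lambda_2}$ in a fibre-dependent way (and $L_1\times X_{\Lambda_1\backslash\Gamma}\not\subset P_{\Lambda_1}$, so Lemma~4.2 cannot be applied to the pair $\Lambda_1\subset\Lambda_2$ with the set you would need). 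Re-applying Lemma~4.2 at level $\Lambda_2$ produces a possibly smaller $G_\delta$-set $L_2$, whose complement $M_2\supset M_1$ contains new points whose fibres have not been thinned, forcing a further enlargement $\Lambda_3$, and so on: the process never closes up after finitely many steps, so the configuration your gluing step presupposes (one $\Lambda$, one $L$, a homeomorphism on all of $L\times X_{\Lambda\backslash\Gamma}$, and nowhere dense fibres exactly over $P_\Gamma\backslash L$) is never reached.

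The paper's proof consists precisely of the machinery needed to close this loop, and it is not routine. It builds an increasing sequence $\Lambda(0)\subset\Lambda(1)\subset\cdots$ and a decreasing sequence $L_0\supset L_1\supset\cdots$ of closed $G_\delta$-sets by alternating the two lemmas, with the crucial extra feature that Lemma~4.2 is applied at step $n$ to the pair $\Lambda(n)\subset\Lambda(n+1)$ over the base $P_{\Lambda(n)}$ (and the resulting $G_\delta$-set is then cut down to a product $L_{n+1}\times X_{\Lambda(n+1)\backslash\Gamma}$ with $L_{n+1}\subset L_n$). This choice of base is what makes the successive homeomorphisms $f_n$ commute with the projections $p_n$ --- condition (d) of the paper --- because the new homeomorphism is fibrewise over $f_{\Lambda(n)}$, which agrees with $f_n$ on $L_{n+1}\times X_{\Lambda(n)\backslash\Gamma}$. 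Only because of (d) do the $f_n$ assemble, by an inverse-limit argument, into a homeomorphism $f_\infty$ on $L\times X_{\Lambda\backslash\Gamma}$, where $\Lambda=\bigcup_n\Lambda(n)$ and $L=\bigcap_n L_n$; and every point of $P_\Gamma\backslash L$ lies in some $P_\Gamma\backslash L_n$, so its fibre is nowhere dense by your persistence remark. At that stage Lemma~4.1 applies exactly as in your last paragraph. Without the alternating $\omega$-sequence, the compatibility condition, and the inverse limit, your proposal is incomplete at its central step.
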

\begin{proof}
Recall that for every $f$-admissible set $T\subset A$ we denote by $P_T^{(\lambda)}$ the set $\pi_T(P^{(\lambda)})$.
Since $B$ and $\Gamma$ are saturated and $B$ is of cardinality $\lambda^+$, there is a saturated set $\Lambda(1)\subset B$ containing $\Gamma$ such that $\Lambda(1)\backslash\Gamma\neq\varnothing$ is countable.
 Observe that $\pi_\Gamma^{-1}(P_\Gamma^{(\lambda)})=P^{(\lambda)}$ and $\pi_{\Lambda(1)}^{-1}(P_{\Lambda(1)}^{(\lambda)})=P^{(\lambda)}$
 because $\pi_C^{-1}(P_C^{(\lambda)})=P^{(\lambda)}$. The last two equalities together with $f(P^{(\lambda)})=P^{(\lambda)}$ imply that
$P_{\Lambda(1)}^{(\lambda)}=P_{\Gamma}^{(\lambda)}\times X_{\Lambda(1)\backslash\Gamma}$, $f_\Gamma(P_{\Gamma}^{(\lambda)})=P_{\Gamma}^{(\lambda)}$ and $f_{\Lambda(1)}(P_{\Lambda(1)}^{(\lambda)})=P_{\Lambda(1)}^{(\lambda)}$.

 Then, we can apply Lemma 4.2 for the pair $\Gamma\subset \Lambda(1)$ to obtain a closed $G_\delta$-set $L_1$ in $P_\Gamma$ containing $P_\Gamma^{(\lambda)}$ and a fiberwise homeomorphism $f_1:L_1\times X_{\Lambda(1)\backslash\Gamma}\to
f_\Gamma(L_0)\times X_{\Lambda(1)\backslash\Gamma}$ with respect to $f_\Gamma|L_1$ extending $f_{\Lambda(1)}|((L_1\times X_{\Lambda(1)\backslash\Gamma})\cap P_{\Lambda(1)})$.
Since the sets $F_1=P_{\Gamma}\backslash L_1$ and $F_1'=P_{\Gamma}\backslash f_\Gamma(L_1)$ are $\sigma$-compact disjoint from $P_\Gamma^{(\lambda)}$,
according to Lemma 2.5 there is a saturated set
$\Lambda(2)\subset B$ containing $\Lambda(1)$ such that $\Lambda(2)\backslash\Lambda(1)\neq\varnothing$ is countable and
the sets
$\pi^{\Lambda(2)}_{\Lambda(2)\backslash\Gamma}((\pi_{\Gamma}^{\Lambda(2)})^{-1}(x)\cap P_{\Lambda(2)})$  and
$\pi^{\Lambda(2)}_{\Lambda(2)\backslash\Gamma}((\pi_{\Gamma}^{\Lambda(2)})^{-1}(y)\cap P_{\Lambda(2)})$
are nowhere dense in $X_{\Lambda(2)\backslash\Gamma}$ for all $x\in F_1$ and $y\in F'_1$.

Since
$P_{\Lambda(2)}^{(\lambda)})=P_{\Lambda(1)}^{(\lambda)}\times X_{\Lambda(2)\backslash\Lambda(1)}$ and
$f_{\Lambda(2)}(P_{\Lambda(2)}^{(\lambda)})=P_{\Lambda(2)}^{(\lambda)}$,
we can apply Lemma 4.2 (with $F=P_{\Lambda(1)}^{(\lambda)}$) for the saturated sets $\Lambda(1)\subset\Lambda(2)$.
Therefore, there exist
a closed $G_\delta$-set $L_2'$ in $P_{\Lambda(1)}$ containing $P_{\Lambda(1)}^{(\lambda)})$ and a fiberwise homeomorphism
$f_2':L_2'\times X_{\Lambda(2)\backslash\Lambda(1)}\to
f_{\Lambda(1)}(L_2')\times X_{\Lambda(2)\backslash\Lambda(1)}$ with respect to $f_{\Lambda(1)}|L_2'$ extending $f_{\Lambda(2)}|((L_2'\times X_{\Lambda(2)\backslash\Lambda(1)})\cap P_{\Lambda(2)})$. Since $L_1\times X_{\Lambda(1)\backslash\Gamma}$ is a $G_\delta$-set in $P_\Gamma\times X_{\Lambda(1)\backslash\Gamma}$
and $(\pi^{\Lambda(1)}_\Gamma)^{-1}(P_\Gamma^{(\lambda)})=P_{\Lambda(1)}^{(\lambda)}$,
there exists a closed $G_\delta$-set $L_2$ in $P_\Gamma$ containing $P_\Gamma^{(\lambda)}$ such that $L_2\subset L_1$ and
$L_2\times X_{\Lambda(1)\backslash\Gamma}\subset L_2'$. So,
$L_2\times X_{\Lambda(2)\backslash\Gamma}\subset L_2'\times X_{\Lambda(2)\backslash\Lambda(1)}$.
Let $f_2$ be the restriction of $f_2'$ on $L_2\times X_{\Lambda(2)\backslash\Gamma}$. Then $f_2$ is a fiberwise homeomorphism between
$L_2\times X_{\Lambda(2)\backslash\Gamma}$ and $f_\Gamma(L_2)\times X_{\Lambda(2)\backslash\Gamma}$ with respect to $f_{\Lambda(1)}|(L_2\times X_{\Lambda(1)\backslash\Gamma})$ extending
$f_{\Lambda(2)}|((L_2\times X_{\Lambda(2)\backslash\Gamma})\cap P_{\Lambda(2)})$.

In this way we can construct an increasing sequence $\{\Lambda(n)\}$ of saturated subsets of $B$ each containing $\Gamma$, a decreasing sequence $\{L_n\}$ of closed $G_\delta$-sets in $P_\Gamma$ each containing $P_\Gamma^{(\lambda)}$, and homeomorphisms $f_n$ between $L_n\times X_{\Lambda(n)\backslash\Gamma}$ and
$f_\Gamma(L_n)\times X_{\Lambda(n)\backslash\Gamma}$ extending $f_{\Lambda(n)}|((L_n\times X_{\Lambda(n)\backslash\Gamma})\cap P_{\Lambda(n)})$
such that for each $n$ we have:
\begin{itemize}
\item[(a)] $\Lambda(n+1)\backslash\Lambda(n)\neq\varnothing$ is countable;
\item[(b)] The set $\pi^{\Lambda(n+1)}_{\Lambda(n+1)\backslash\Gamma}((\pi_{\Gamma}^{\Lambda(n+1)})^{-1}(x)\cap P_{\Lambda(n+1)})$ is nowhere dense in $X_{\Lambda(n+1)\backslash\Gamma}$ for all $x\in F_n=P_{\Gamma}\backslash L_n$;
\item[(c)] The set $\pi^{\Lambda(n+1)}_{\Lambda(n+1)\backslash\Gamma}((\pi_{\Gamma}^{\Lambda(n+1)})^{-1}(y)\cap P_{\Lambda(n+1)})$
is nowhere dense in $X_{\Lambda(n+1)\backslash\Gamma}$ for all $y\in F'_n=P_{\Gamma}\backslash f_\Gamma(L_n)$;
\item[(d)] Each $f_{n+1}$ is a fiberwise homeomorphism with respect to the restriction $f_n|(L_{n+1}\times X_{\Lambda(n)\backslash\Gamma})$.
\end{itemize}

Let $\Lambda(\Gamma)=\bigcup_{n\geq 0}\Lambda(n)$ and $L=\bigcap_{n\geq 0}L(n)$. For every $n$ consider the map $p_n: L\times X_{\Lambda(n+1)\backslash\Gamma}\to L\times X_{\Lambda(n)\backslash\Gamma}$ defined by $p_n(x,y)=(x,\pi^{\Lambda(n+1)}_{\Lambda(n)}(y))$.
Then $L\times X_{\Lambda(\Gamma)\backslash\Gamma}$ is the limit of the inverse  sequence $\{L\times X_{\Lambda(n)\backslash\Gamma},p_n\}$ and
$f_\Gamma(L)\times X_{\Lambda(\Gamma)\backslash\Gamma}$ is the limit of the inverse  sequence $\{f_\Gamma(L)\times X_{\Lambda(n)\backslash\Gamma},p_n\}$.
Since $p_n\circ f_{n+1}=f_n\circ p_n$ for every $n$,
the homeomorphisms $f_n$ provide a homeomorphism $f_\infty$ between $L\times X_{\Lambda(\Gamma)\backslash\Gamma}$ and $f_\Gamma(L)\times X_{\Lambda(\Gamma)\backslash\Gamma}$ extending $f_{\Lambda(\Gamma)}|((L\times X_{\Lambda(\Gamma)\backslash\Gamma})\cap P_{\Lambda(\Gamma)})$.
 Moreover, items (b) and (c) imply that the sets
 $\pi^{\Lambda(\Gamma)}_{\Lambda(\Gamma)\backslash\Gamma}((\pi_\Gamma^{\Lambda(\Gamma)})^{-1}(x)\cap P_{\Lambda(\Gamma)})$ and
$\pi^{\Lambda(\Gamma)}_{\Lambda(\Gamma)\backslash\Gamma}((\pi_{\Gamma}^{\Lambda(\Gamma)})^{-1}(y)\cap P_{\Lambda(\Gamma)})$ are nowhere dense in $X_{\Lambda(\Gamma)\backslash\Gamma}$ for all $x\in P_{\Gamma}\backslash L$ and $y\in P_{\Gamma}\backslash f_\Gamma(L)$. Finally, by Lemma 4.1, $f_\infty$ can be extended to a homeomorphism $\widetilde f_{\Lambda(\Gamma)}$ between $P_\Gamma\times X_{\Lambda(\Gamma)\backslash\Gamma}$ and $P_\Gamma\times X_{\Lambda(\Gamma)\backslash\Gamma}$.
Obviously, $\widetilde f_{\Lambda(\Gamma)}$ is a fiberwise extension of $f_{\Lambda(\Gamma)}$ with respect to $f_\Gamma$.
\end{proof}

\begin{cor}
Let the saturated sets $C\subset B$  satisfy the hypotheses of Proposition $2.3$
with  $X=\mathfrak{C}^A$.
Suppose also that $f_C$
can be extended to a homeomorphism $\widetilde f_C$ of $X_C$.
Then the homeomorphism $f_B$ can be extended to a homeomorphism
$\widetilde f_B$ of $X_B$ such that $\pi^B_C\circ\widetilde f_B=\widetilde f_C\circ\pi^B_C$.
\end{cor}

\begin{proof}
Using Lemma 4.4 (and the notations from that lemma), we can represent $B$ as an increasing family $\{B(\alpha):\alpha<\omega(\lambda^+)\}$ of saturated sets $B(\alpha)$ each of cardinality $\leq\lambda$ such that:
\begin{itemize}
\item[(3)] $B(0)=C$;
\item[(4)] $B(\alpha)=\bigcup_{\beta<\alpha}B(\beta)$ if $\alpha$ is a limit ordinal;
\item[(5)]  $B(\alpha+1)=\Lambda(B(\alpha))$, so $B(\alpha+1)\backslash B(\alpha)$ is countable for all $\alpha$;
\item[(6)] $f_{B(\alpha+1)}$ can be extended to a fiberwise homeomorphism\\ $f_{B(\alpha+1)}':P_{B(\alpha)}\times X_{B(\alpha+1)\backslash B(\alpha)}\to P_{B(\alpha)}\times X_{B(\alpha+1)\backslash B(\alpha)}$  with respect to $f_{B(\alpha)}$.
\end{itemize}
We are going to prove that each $f_{B(\alpha)}$ can be extended to a homeomorphism $\widetilde f_{B(\alpha)}$ of $X_{B(\alpha)}$ satisfying the following equality
\begin{itemize}
\item[(7)] $\displaystyle\pi_{B(\alpha)}^{B(\alpha+1)}\circ\widetilde f_{B(\alpha+1)}=\widetilde
f_{B(\alpha)}\circ\pi_{B(\alpha)}^{B(\alpha+1)}$.
\end{itemize}
The proof is by transfinite induction. The extension $\widetilde f_{B(0)}$ exists according to our assumption since $B(0)=C$. If $\widetilde f_{B(\beta)}$ is defined for each $\beta<\alpha$, where $\alpha$ is a limit ordinal, then item $(4)$
implies the existence of $\widetilde f_{B(\alpha)}$. Therefore, we need only to define $\widetilde f_{B(\alpha+1)}$ provided $\widetilde f_{B(\alpha)}$
exists. To that end, since $B(\alpha+1)=\Lambda(B(\alpha))$, according to Lemma 4.4, $f_{B(\alpha+1)}$ can be extended to a
fiberwise homeomorphism $f_{B(\alpha+1)}'$ of $P_{B(\alpha)}\times X_{B(\alpha+1)\backslash B(\alpha)}$ with respect to $f_{B(\alpha)}$. Then, by Corollary 3.5, $f_{B(\alpha+1)}'$ is extended to a homeomorphism $\widetilde f_{B(\alpha+1)}$ satisfying condition $(7)$.
\end{proof}

{\em Proof of Theorem $1.1$.}
Let show first that the proof is reduced to the case of one subset $P\subset D^A$ and an autohomeomorphism $f\in\mathcal H(P)$. Indeed, take two disjoint copies $X$ and $Y$ of $D^A$ with $P\subset X$ and $K\subset Y$, and let
$Q=P\biguplus K$ be the disjoint union of $P$ and $K$. Obviously, $X\biguplus Y$ is homeomorphic to $D^A$ 
and $g=f\biguplus f^{-1}$ is an autohomeomorphism of $Q$ with $g(Q^{(\lambda)})=Q^{(\lambda)}$ for all cardinals $\lambda$. Suppose $f\biguplus f^{-1}$ can be extended to a homeomorphism $F:X\biguplus Y\to X\biguplus Y$.
Choose two clopen neighborhoods $X'$ and $Y'$ of $P$ and $K$ in $X$ and $Y$, respectively, with $X\backslash X'\neq\varnothing\neq Y\backslash Y'$
such that $F(X')=Y'$.
Then 
there is a homeomorphism
$G:X\backslash X'\to Y\backslash Y'$, and $F|X'$ and $G$ provide a homeomorphism $\widetilde f:X\to Y$ extending $f$.
Therefore, we can suppose that we have one subset $P$ of $D^A$ and a homeomorphism $f\in\mathcal H(P)$. Because $f$ preserves the interior of $P$, we can also assume $P$ is nowhere dense in $D^A$. Moreover, we identify $D^A$ with $X=\mathfrak{C}^A$, where $A$ is an uncountable set of cardinality $\tau$. Recall that $\mathfrak{L}=\mathfrak{L}_P$ denotes the set of all infinite cardinal numbers $\mu\leq\tau$ with $P^{(\mu)}\neq\varnothing$, and let $\mathfrak{L}'=\mathfrak{L}\cup\{\aleph_0\}$. Obviously, if $\lambda\in\mathfrak{L}$, then  $\mu\in\mathfrak{L}$ for all $\mu\geq\lambda$, in particular, $\tau\in\mathfrak{L}$.
Take a functionally open and dense subset $U$ of $X\backslash P$ and a countable saturated set $C\subset A$ with
$\pi_C^{-1}(\pi_C(U))=U$ (that is possible because every continuous function on $X$ depends on countably many coordinates). Then $P_C$ is nowhere dense in $X_C$. If $\aleph_0\not\in\mathfrak{L}$, we put $\Gamma(\aleph_0)=C$. In case $\aleph_0\in\mathfrak{L}$, by Proposition 2.1 and \cite{ef}, there exits a countable saturated set $C_1\subset A$ with $\pi_{C_1}^{-1}(\pi_{C_1}(P^{(\aleph_0)}))=P^{(\aleph_0)}$, and  we denote $\Gamma(0)=C\cup C_1$. Therefore, in both cases $P_{\Gamma(\aleph_0)}$ is nowhere dense in $X_{\Gamma(0)}$, $\Gamma(0)$ is a countable saturated set and $\pi_{\Gamma(0)}^{-1}(P_{\Gamma(0)}^{(\aleph_0)})=P^{(\aleph_0)}$.

The case when $P^{(\lambda)}=\varnothing$ for all $\lambda<\tau$ was settled in \cite{sv}. Hence, we suppose that $\mathfrak{L}$ is not a single-point set. Then
there are two passible cases: either $P\neq P^{(\lambda)}$ for all $\lambda<\tau$ or $P=P^{(\lambda)}$ for some $\lambda<\tau$.

Consider first the case $P\neq P^{(\lambda)}$ for all $\lambda<\tau$. 
Denote by $\mathfrak{L}_c$ the cardinals $\lambda$ at which $\mathfrak{L}$ is continuous.
We construct a family $\{\Gamma(\lambda):\lambda\leq\tau\}$ of saturated subsets of $A$ such that:
\begin{itemize}
\item[(8)] $\Gamma(\lambda)$ is of cardinality $\lambda$ and $\Gamma(\aleph_0)$ is the set constructed above;
\item[(9)] $\Gamma(\lambda)\subset \Gamma(\mu)$ provided $\lambda<\mu$, and $\Gamma(\tau)=A$;
\item[(10)] Every compact set $F\subset P_{\Gamma(\lambda^+)}\backslash P_{\Gamma(\lambda^+)}^{(\lambda)}$ is $\lambda^+$-negligible in $X_{\Gamma(\lambda^+)}$;
\item[(11)] $\pi_{\Gamma(\lambda)}^{-1}(P_{\Gamma(\lambda)}^{(\lambda)})=P^{(\lambda)}$;
\item[(12)]  $\Gamma(\lambda)=\bigcup_{\gamma<\lambda}\Gamma(\gamma)$ if $\lambda\in\mathfrak{L}_c$.
\end{itemize}
The construction is by transfinite induction.
Suppose the sets $\Gamma(\lambda)$ are already constructed for all $\lambda<\lambda_0$. If $\lambda_0=\lambda^+$ for some $\lambda<\lambda_0$, by Proposition 2.3, there exists a saturated set $\Gamma(\lambda_0)$ of cardinality $\lambda_0$ containing $\Gamma(\lambda)$ such that
$\pi_{\Gamma(\lambda_0)}^{-1}(P_{\Gamma(\lambda_0)}^{(\lambda_0)})=P^{(\lambda_0)}$ and every compact set in 
$P_{\Gamma(\lambda_0)}\backslash P_{\Gamma(\lambda_0)}^{(\lambda)}$ is $\lambda_0$-negligible in $X_{\Gamma(\lambda_0)}$. If $\lambda_0$ is a limit cardinal such that $\mathfrak{L}$ is continuous at $\lambda_0$, let $\Gamma(\lambda_0)$ be the union of all $\Gamma(\lambda)$, $\lambda<\lambda_0$. In this case, since each $\Gamma(\gamma)$, $\gamma<\lambda_0$, satisfies condition $(11)$ and $\lambda_0\in\mathfrak{L}_c$,
$\lambda_0$ also satisfies $(11)$. 
Finally, if $\mathfrak{L}$ is discontinuous at $\lambda_0$, we take a saturated set $\Gamma(\lambda_0)$ of cardinality $\lambda_0$ containing $\bigcup_{\lambda<\lambda_0}\Gamma(\lambda)$ with $\pi_{\Gamma(\lambda_0)}^{-1}(P_{\Gamma(\lambda_0)}^{(\lambda_0)})=P^{(\lambda_0)}$.

Denote by $f_\lambda$ the homeomorphism $f_{\Gamma(\lambda)}$. We are going to extend by transfinite induction each $f_\lambda$ to a homeomorphism
$\widetilde{f_\lambda}\in H(X_{\Gamma(\lambda)})$ with
\begin{itemize}
\item [(***)] $\displaystyle\pi_{\Gamma(\gamma)}^{\Gamma(\lambda)}\circ\widetilde f_{\lambda}=\widetilde
f_{\gamma}\circ\pi_{\Gamma(\gamma)}^{\Gamma(\lambda)}$ for all $\gamma<\lambda$.
\end{itemize}
 The homeomorphism $\widetilde f_{0}$ exists by
Knaster-Reichbach's theorem mentioned above because $P_{\Gamma(0)}$ is nowhere dense in $X_{\Gamma(0)}$. Suppose
$\widetilde{f_\gamma}$ is already constructed for all $\gamma<\lambda$. If $\lambda$ has a predecessor $\gamma_0$, i.e., $\lambda=\gamma_0^+$,    then the sets $\Gamma(\gamma_0)\subset\Gamma(\lambda)$
satisfy the hypotheses of Proposition 2.3 and, by Corollary 4.5, $f_{\lambda}$ can be extended to a homeomorphism $\widetilde f_{\lambda}\in H(X_{\Gamma(\lambda)})$ such that condition $(***)$ holds. 
If $\lambda$ is a limit cardinal then either $\lambda\in\mathfrak{L}_c$ or $\lambda\in\mathfrak{L}_d$. In case $\lambda\in\mathfrak{L}_c$, we have $\Gamma(\lambda)=\bigcup_{\gamma<\lambda}\Gamma(\gamma)$. So, we can define $\widetilde f_{\lambda}$ as the limit of all $\widetilde f_{\gamma}$, $\gamma<\lambda$. Suppose $\lambda\in\mathfrak{L}_d$, so $P^{(\lambda)}\neq\varnothing$. 
Since, $\mathfrak{L}_d$ is discrete in $\mathfrak{L}$, there is $\gamma_0<\lambda$ such that $(\gamma_0,\lambda)\subset \mathfrak{L}_c$.
In this case the set $\Gamma(\lambda)$ may not be the union of all $\Gamma(\gamma)$ with $\gamma<\lambda$, and we cannot define $\widetilde f_{\lambda}$ to be the limit of the homeomorphisms
$\widetilde f_{\gamma}$, $\gamma<\lambda$. But we are going to show that there is an extension $\widetilde f_\lambda$ of $f_\lambda$ such that   
$\displaystyle\pi_{\Gamma(\gamma_0)}^{\Gamma(\lambda)}\circ\widetilde f_{\lambda}=\widetilde
f_{\gamma_0}\circ\pi_{\Gamma(\gamma_0)}^{\Gamma(\lambda)}$.
\begin{claim}
 There is an increasing family of saturated sets $\{\Lambda(\gamma)\}_{\gamma_0\leq\gamma<\lambda}$ each of cardinality $\gamma$ such that:
\begin{itemize}  
\item[(13)] $\Gamma(\lambda)=\bigcup_{\gamma_0\leq\gamma<\lambda}\Lambda(\gamma)$ and $\Lambda(\gamma_0)=\Gamma(\gamma_0)$; 
\item[(14)] $\Lambda(\beta)=\bigcup_{\gamma<\beta}\Lambda(\gamma)$ for all limit cardinals $\beta\in (\gamma_0,\lambda)$;
\item[(15)] $\pi_{\Lambda(\gamma)}^{-1}(P_{\Lambda(\gamma)}^{(\gamma)})=P^{(\gamma)}$;
\item[(16)] Each pair $\Lambda(\gamma)\subset\Lambda(\gamma^+)$, $\gamma_0\leq\gamma<\lambda$, satisfies the hypotheses  of Proposition 2.3.
\end{itemize}
\end{claim}
Indeed, since $\Gamma(\lambda)$ and $\Lambda(\gamma_0)$ are saturated sets of cardinality $\lambda$ and $\gamma_0$, respectively, we can assume that $\Gamma(\lambda)=\bigcup_{\alpha\in\Gamma(\lambda)}B(\alpha)$ and  $\Lambda(\gamma_0)=\bigcup_{\alpha\in\Lambda(\gamma_0)}B(\alpha)$, where all  $B(\alpha)$ are countable saturated sets. We also represent $\Gamma(\lambda)\backslash\Gamma(\gamma_0)$ as the union of an increasing
family $\{\Theta(\gamma):\gamma_0\leq\gamma<\lambda\}$ such that  each $\Theta(\gamma)$ has cardinality $\gamma$. Since $\Gamma(\gamma_0)$ and $\Gamma(\lambda)$ satisfy condition $(11)$, there is set $\Theta\subset\Gamma(\lambda)$ of cardinality $\gamma_0^+$ containing $\Gamma(\gamma_0)$ 
with $\pi_{\Theta}^{-1}(P_{\Theta}^{(\gamma_0^+)})=P^{(\gamma_0^+)}$. Then the set $\Lambda(\gamma_0^+)=\bigcup_{\alpha\in\Theta}B(\alpha)$ is a saturated set of cardinality $\gamma_0^+$ which contains $\Gamma(\gamma_0)$ and is contained in $\Gamma(\lambda)$ such that  
$\pi_{\Lambda(\gamma_0^+)}^{-1}(P_{\Lambda(\gamma_0^+)}^{(\gamma_0^+)})=P^{(\gamma_0^+)}$. By Proposition 2.3(ii), we can also suppose that every compact subset of $P_{\Lambda(\gamma_0^+)}\backslash P_{\Lambda(\gamma_0^+)}^{(\gamma_0)}$ is $\gamma_0^+$-negligible in $X_{\Lambda(\gamma_0^+)}$. Suppose the sets $\Lambda(\gamma)$ are constructed for all $\gamma<\beta$ satisfying conditions $(13-(16)$. If $\beta$ is a limit cardinal, then $\beta\in\mathfrak{L}_c$ because $(\gamma_0,\lambda)\subset\mathfrak{L}_c$. Since each $\Lambda(\gamma)$, $\gamma<\beta$, satisfies condition $(15)$, so does the set 
$\Lambda(\beta)=\bigcup_{\gamma<\beta}\Lambda(\gamma)$. If $\beta=\gamma^+$ for some $\gamma$, we define $\Lambda(\beta)=\bigcup_{\alpha\in\Theta(\gamma^+)}B(\alpha)\cup\Lambda(\gamma)$. In this case, increasing $\Lambda(\beta)$ if necesarily, we can assume that the pair $\Lambda(\gamma)\subset\Lambda(\beta)$ satisfies the conditions of Proposition 2.3.
This completes the proof of Claim 4.6.

We are going to define homeomorphisms $g(\gamma)=g_{\Lambda(\gamma)}\in H(X_{\Lambda(\gamma)})$ extending $f_\gamma$, $\gamma\in (\gamma_0,\lambda)$ such that
$\displaystyle\pi_{\Lambda(\gamma_2)}^{\Lambda(\gamma_1)}\circ g(\gamma_1)=
g(\gamma_2)\circ\pi_{\Lambda(\gamma_2)}^{\Lambda(\gamma_1)}$ for $\gamma_0<\gamma_2<\gamma_1<\lambda$ and 
$\displaystyle\pi_{\Lambda(\gamma_0)}^{\Lambda(\gamma)}\circ g(\gamma)=
f_{\gamma_0}\circ\pi_{\Lambda(\gamma_0)}^{\Lambda(\gamma)}$ for $\gamma\in (\gamma_0,\lambda)$.

 The homeomorphism $g(\gamma_0^+)$ exists by Corollary 4.5. Suppose $\beta\in (\gamma_0,\lambda)$ and the homeomorphisms $g(\gamma)$, $\gamma<\beta$, have been defined for all $\gamma\in (\gamma_0,\beta)$. If $\beta$ is a limit cardinal, then  
 by $(14)$, we can define $g(\beta)$ as the limit of all $g(\gamma)$, $\gamma\in (\gamma_0,\beta)$.
If $\beta=\gamma^+$ for some $\gamma>\gamma_0$, the required homeomorphism $g(\beta)$ exists by Corollary 4.5 because the pair $\Lambda(\gamma)\subset\Lambda(\beta)$ satisfies the hypotheses of Proposition 2.3. Finally, we define $\widetilde f_\lambda$ to be the limit of all $g_\gamma$, $\gamma<\lambda$. This definition is correct because $\Gamma(\lambda)=\bigcup_{\gamma_0\leq\gamma<\lambda}\Lambda(\gamma)$.

 Now, consider the case when $P=P^{(\lambda_0)}$ for some $\lambda_0<\tau$.
In this situation we take saturated sets $\Gamma(\lambda)$, $\lambda\leq\lambda_0$ satisfying conditions $(8)-(10)$ with
$P=\pi_{\Gamma(\lambda_0)}^{-1}(P_{\Gamma(\lambda_0)})$. Then $P=P_{\Gamma(\lambda_0)}\times X_{A\backslash\Gamma(\lambda_0)}$. We claim that
the homeomorphism $f_{\lambda_0}\in H(P_{\Gamma(\lambda_0)})$ satisfies the hypotheses of Theorem 1.1, i.e. $f_{\lambda_0}$ preserves the $\lambda$-interiors of $P_{\Gamma(\lambda_0)}$ for all $\lambda\leq\lambda_0$. This is obvious for $\lambda=\lambda_0$ because the weight of $P_{\Gamma(\lambda_0)}$ is $\leq\lambda_0$. Suppose $\lambda<\lambda_0$ is not a limit cardinal. Since
$P^{(\lambda)}=\pi_{\Gamma(\lambda)}^{-1}(P^{(\lambda)}_{\Gamma(\lambda)})$ and
$P^{(\lambda)}_{\Gamma(\lambda_0)}=(\pi^{\Gamma(\lambda_0)}_{\Gamma(\lambda)})^{-1}(P^{(\lambda)}_{\Gamma(\lambda)})$ we have
 $P^{(\lambda)}_{\Gamma(\lambda_0)}\subset (P_{\Gamma(\lambda_0)})^{(\lambda)}$. On the other hand, the inclusion
 $(\pi_{\Gamma(\lambda_0)})^{-1}((P_{\Gamma(\lambda_0)})^{(\lambda)})\subset P^{(\lambda)}$ implies
 $(P_{\Gamma(\lambda_0)})^{(\lambda)}\subset P_{\Gamma(\lambda_0)}^{(\lambda)}$. Hence, $(P_{\Gamma(\lambda_0)})^{(\lambda)}= P_{\Gamma(\lambda_0)}^{(\lambda)}$ and, because $f(P^{(\lambda)})=P^{(\lambda)}$ and $\pi_{\Gamma(\lambda_0)}\circ f=f_{\lambda_0}\circ\pi_{\Gamma(\lambda_0)}$, $f_{\lambda_0}$ preserves the $\lambda$-interior of $P_{\Gamma(\lambda_0)}$. The case when $\lambda<\lambda_0$ is a limit cardinal can be teated in a similar way.
 Hence, we can apply the previous case to extend $f_{\lambda_0}$ to a homeomorphism $\widetilde{f}_{\lambda_0}\in H(X_{\Gamma(\lambda_0)})$. Finally, Corollary 3.5 provides a homeomorphism $\widetilde f\in H(X)$ extending $f$.
   $\Box$


\end{document}